\DeclareMathOperator{\area}{Area}
\def\D{\mathbb{D}}
\def\H{\mathbb{H}}
\def\R{\mathbb{R}}
\newcolumntype{C}[1]{>{\centering\let\newline\\\arraybackslash\hspace{0pt}}m{#1}}
\newcommand\restr[2]{{
  \left.\kern-\nulldelimiterspace 
  #1 
  \vphantom{\big|} 
  \right|_{#2} 
  }}
\crefname{hypothesis}{Hypothesis}{Hypotheses}
\title{A Structure-Preserving Numerical Method for Harmonic Maps Between High-genus Surfaces
\thanks{Submitted to the editors DATE.
}}
\author{Zhipeng Zhu\thanks{
              Department of Mathematics, The Chinese University of Hong Kong
              (\email{zpzhu@math.cuhk.edu.hk}).}
           \and
           Wai Yeung Lam\thanks{
              Department of Mathematics, University of Luxembourg
              (\email{wyeunglam@gmail.com}).}
           \and
           Lok Ming Lui\thanks{
              Department of Mathematics, The Chinese University of Hong Kong
              (\email{lmlui@math.cuhk.edu.hk}).}
}
\begin{document}

\maketitle

\begin{abstract}
Motivated by geometry processing for surfaces with non-trivial topology, we study discrete harmonic maps between closed surfaces of genus at least two. Harmonic maps provide a natural framework for comparing surfaces by minimizing distortion. Unlike conformal or isometric maps—which may not exist between surfaces with different geometries—harmonic maps always exist within a fixed homotopy class and yield optimal homeomorphisms when the target surface has negative curvature.
We develop a structure-preserving algorithm to compute harmonic maps from a triangulated surface to a reference hyperbolic surface. The method minimizes Dirichlet energy over geodesic realizations of the surface graph into the target hyperbolic surface in the homotopy class of a homeomorphism. A central feature of our framework is the use of canonical edge weights derived from the hyperbolic metric, which generalize the classical cotangent weights from the Euclidean setting. These weights preserve injectivity and ensure that isometries remain harmonic in the discrete theory, reflecting their classical behavior.
\end{abstract}

\begin{keywords}
Discrete harmonic maps, hyperbolic surfaces, discrete uniformization, Riemannian optimization
\end{keywords}

\begin{AMS}
58E20, 05C10, 53C43, 65D18
\end{AMS}

\section{Introduction}

Constructing optimal homeomorphisms between surfaces is a fundamental problem in geometry processing, with applications in surface registration, shape analysis, and computer graphics. When surfaces have non-trivial topology, such as closed surfaces of genus $g \geq 2$, the problem becomes significantly more challenging due to the lack of canonical parameterizations and the variability of their geometric structures.

Harmonic maps offer a natural and robust framework for addressing this challenge. Given two Riemannian surfaces of the same topological type, a harmonic map is a critical point of the Dirichlet energy within a fixed homotopy class. Such maps always exist when the target surface has non-positive curvature and, when fixed in the homotopy class of a homeomorphism, provide diffeomorphisms that minimize geometric distortion. Unlike conformal or isometric maps that may not exist, harmonic maps arise naturally as minimizers of an energy functional, making them attractive for practical applications.

Although the theory of harmonic maps is well established in the smooth setting, its discrete counterpart requires careful attention to discretization choices. Naive discretizations of the Dirichlet energy may fail to preserve important structural properties such as injectivity or stability under geometric deformation. This motivates the development of \emph{structure-preserving discretizations}—discrete analogues that faithfully reflect the geometry of the surfaces involved and ensure that the resulting numerical maps behave consistently with the smooth theory.

In this work, we develop a structure-preserving numerical method for computing discrete harmonic maps between hyperbolic surfaces. Given a geodesically triangulated hyperbolic surface $S_1=(V,E,F)$, our method computes a discrete harmonic map to a target hyperbolic surface $S_2$, which is identified with $S_1$ via a homeomorphism. Here the discrete harmonic map is defined as the minimizer of the Dirichlet energy over all geodesic realizations of the 1-skeleton graph of the triangulation $f:(V,E)\to S_2$ within a homotopy class induced from the homeomorphism. With a prescribed edge weights $c:E\to \R_{>0}$, the discrete Dirichlet energy of a geodesic realization $f$ is defined as
\begin{equation*}
    D_c(f) = \frac{1}{2}\sum_{ij} c_{ij} \ell_{ij}^2,
\end{equation*}
where each edge $ij$ is realized as a geodesic with hyperbolic length $\ell_{ij}$. Following a theorem of Colin de Verdière~\cite{CdV1991}, we can show that there is a unique minimizer of $D_c(f)$ in the given homotopy class, which is necessarily an embedding. 

A novelty of our framework is the use of \emph{canonical edge weights}, which capture the geometry of $S_1$ and generalize the classical cotangent weights from the Euclidean setting \cite{pinkall1993computing} to the hyperbolic case \cite{Lam2024harmonic}. The canonical edge weights satisfy two essential properties: they are positive when the input triangulation is Delaunay, ensuring local \emph{injectivity} of the resulting map, and they preserve \emph{harmonicity of isometries}. These properties make the weights particularly suitable for geometry-aware computations and ensure that the discretization preserves key structural features of the smooth theory.

We show that the discrete Dirichlet energy defined using these weights is a proper and differentiable functional on the space of geodesic realizations of the input graph, and that its gradient flow converges exponentially to the unique minimizer in each homotopy class. The theoretical foundation builds on and refines classical results on discrete harmonic maps for surfaces of non-positive curvature by Colin de Verdière \cite{CdV1991}, adapting them to the setting of closed hyperbolic surfaces.

To allow our method to be applied to arbitrary input triangulated surface (not necessarily hyperbolic), we incorporate it into the framework of discrete conformal geometry. Using the discrete uniformization theorem \cite{Gu2018}, we first transform a given triangulated surface into a conformally equivalent hyperbolic surface equipped with a Delaunay triangulation. This provides a canonical geometric setting in which our structure-preserving discretization can be applied. The target hyperbolic surface is represented as a quotient of hyperbolic plane $\D/\Gamma$, where $\Gamma$ is a prescribed Fuchsian group. Taking the Euclidean harmonic map as the initializer, we implement a Riemannian gradient descent algorithm to find the unique hyperbolic harmonic map in the fixed homotopy class.

In summary, this paper introduces an algorithm for discrete harmonic maps that is both theoretically grounded and practically robust. By combining canonical edge weights, hyperbolic geometry, and discrete uniformization, we obtain a structure-preserving numerical method for mapping high-genus surfaces that respects their intrinsic geometry and yields high-quality correspondences. A direct application to remeshing is presented at the end of the article. 

\subsection{Related works}
 Many authors have designed various numerical algorithms for surface maps.  Relevant works include the Tutte embeddings/ Euclidean discrete harmonic maps ~\cite{tutte1963draw,gortler2006discrete,pinkall1993computing}, the elastic-type method~\cite{bajcsy1989multiresolution}, Mumford-Shah method~\cite{mumford1989optimal}, shape-preserving parameterization~\cite{floater1997parametrization}, discrete conformal maps/ circle packings ~\cite{angenent1999conformal,desbrun2002intrinsic,gu2003global,collins2003circle}, discrete Ricci flow~\cite{jin2008discrete}, optimal transport maps~\cite{haker2004optimal}, large diffeomorphic distance metric mapping (LDDMM)~\cite{beg2005computing,zhang2017frequency}, discrete Yamabe flow~\cite{lui2010detection}, and quasi-conformal maps~\cite{zeng2012computing,lui2013texture,zhu2022parallelizable}. 
 
 In addition to the applications mentioned in these papers, notable applications of surface maps also include the moving mesh method~\cite{li2001moving} in numerical PDE, implicitization of parametric surfaces~\cite{sederberg1995implicitization}, high-quality surface remeshing~\cite{eck1995multiresolution,remacle2010high}, surface matching~\cite{zhang1999harmonic}, statistical analysis of shapes~\cite{vaillant2004statistics}, 3D shape matching, recognition, and stiching~\cite{wang2007conformal}, and isogeometric analysis~\cite{xu2011parameterization}. 
 
 Despite the success of surface maps in many applications, few studies have focused on surfaces with genus $\geq 2$. Previous attempts to discrete harmonic maps for such surfaces make heuristic use of the Euclidean formula directly \cite{lui2014geometric,Gaster2018}, which fail to preserve harmonicity of isometries. Also, many existing algorithms for processing high-genus surfaces rely on partitioning them into several simply-connected pieces~\cite{isenburg2005streaming,marchandise2011high}. These facts motivate us to study a novel method for finding global maps between high-genus surfaces.

\section{Harmonic Maps and Their Discretization}

In this section, we discuss the theoretical background of classical harmonic maps for closed surfaces and their structure-preserving discretizations.
\subsection{Classical Harmonic Maps}  

Given two surfaces $S_1$ and $S_2$ in space with the same topology, finding a homeomorphism between them that minimizes distortion is a fundamental problem. The ideal case is an \textit{isometry}, which preserves distances between all pairs of points. However, isometries typically do not exist when the surfaces have different shapes.  

For simply-connected surfaces, a weaker alternative is a \textit{conformal map}, which preserves angles instead of distances. By the uniformization theorem, conformal maps always exist and are unique up to Möbius transformations. However, when the surfaces have non-trivial topology (i.e., genus \( g \geq 1 \)), a conformal map between them may not exist, as they might have different conformal structures.  

A more general approach is to consider \textit{harmonic maps}, which are maps $f:(S_1,h_1) \to (S_2,h_2)$ that minimize the Dirichlet energy within a given homotopy class:  

\begin{equation*}
E(f) = \frac{1}{2} \int_M \|\nabla_{h_2} f\|_{h_1}^2 \, dA_{h_1}.
\end{equation*}

Harmonic maps depend on the conformal structure of the input surface and the metric of the target surface. A classical result states that the harmonic map exists uniquely when the target surface has negative curvature everywhere and the harmonic map is a diffeomorphism if it is homotopic to a homeomorphism \cite{Jost1984}. When the surfaces are conformally equivalent via some conformal map, the harmonic map in that homotopy class coincides with the conformal map. If they are furthermore isometric, the harmonic map is itself an isometry, which is the ideal solution.  

Other approaches to comparing surfaces with different conformal structures include \textit{Teichmüller maps}, which minimize the maximal conformal distortion at every point, which have been implemented numerically \cite{ng2014teichmuller}. Comparing to Teichmüller maps, harmonic maps are more practical computationally for the variational setting, as harmonic maps minimize a $L^2$-functional instead of a $L^{\infty}$-norm.  

\subsection{Uniformization and Hyperbolic Plane}

Harmonic maps depend only on the conformal structure of the input surface, not on a particular choice of metric. Therefore, it is natural to consider canonical metrics associated with each conformal class. According to the uniformization theorem for Riemann surfaces, every Riemannian metric is conformally equivalent to a constant-curvature metric. By the Gauss--Bonnet theorem, the sign of the curvature $K$ is determined by the Euler characteristic of the surface, resulting in three possible types of canonical metrics: spherical ($K = 1$), Euclidean ($K = 0$), and hyperbolic ($K = -1$).

In particular, the canonical metric associated with a torus ($g = 1$) within its conformal class is Euclidean ($K = 0$). For surfaces of genus $g \geq 2$, the canonical metric is hyperbolic ($K = -1$). Such constant-curvature metrics are uniquely determined by the conformal structure of the Riemann surface. 

This paper focuses on computing the harmonic maps between closed surfaces of genus $g \geq 2$ equipped with a hyperbolic metric. Such a surface can be represented as a quotient of the hyperbolic plane by a subgroup of hyperbolic isometries. Here the hyperbolic plane is characterized as the simply-connected Riemann surface equipped with a Riemannian metric of constant curvature $-1$. Below we review basic properties of the hyperbolic plane, which are key to our theoretical reasoning and the design of the numerical algorithm. 

Several equivalent models are commonly used to define the hyperbolic plane and we shall focus on the Poincaré disk model. The Poincaré disk $\D:=\{z:|z|<1\}$ is a simply-connected surface equipped with Riemannian metric 
\begin{equation*}
    \rho_{\D} := \dfrac{2|dz|}{1-|z|^2}.
\end{equation*}
Orientation-preserving isometries of the Poincaré disk model of the hyperbolic plane are exactly the Möbius transformations that map the unit disk to itself. Because of the non-Euclidean metric, geodesics of the hyperbolic plane in the disk model are either Euclidean diameters of the unit circle or Euclidean circles orthogonal to the unit circle. 

Given two points $p$ and $q$ in the hyperbolic plane, there is a unique geodesic connecting them. The length of the geodesic segment between $p$ and $q$ is called the geodesic distance between $p$ and $q$. For example, in the Poincaré disk, the distance between arbitrary $p$ and $q$ is given by
\begin{equation}
d(p,q) = \operatorname{arcosh} (1+\delta(p,q)),
    \label{eq:hyperbolic_distance}
\end{equation}
where
\begin{equation}
    \delta(p,q) = 2\dfrac{||p-q||^2}{(1-||p||^2)(1-||q||^2)}.
\end{equation}
Therefore, a hyperbolic polygon in the Poincaré disk model is bounded by a finite number of geodesic segments, rather than Euclidean straight lines. Similar to the Euclidean geometry, we have a hyperbolic law of cosines relating the hyperbolic distances and angles of a hyperbolic triangle. One of the equation is given by
\begin{equation*}
    \cosh a = \cosh b\cosh c - \sinh b\sinh c\cos A.
\end{equation*}
where $a$, $b$, and $c$ are lengths of the edges respectively and $A$ is the angle opposite to $a$. This law is crucial when we need to construct a hyperbolic polygon, which is key for the construction of a hyperbolic surface as we will see in the algorithm part.

\subsection{Discrete Harmonic Maps}  

In the discrete setting, we begin with a triangulated hyperbolic surface $S_1$ and write $f^{\dagger}: (V,E) \to S_1$ the embedding of the 1-skeleton graph. The target surface is another hyperbolic surface $S_2$, which is identified with $S_1$ via a homeomorphism $h:S_1 \to S_2$. The composition $h\circ f^{\dagger}: (V,E) \to S_2$ yields a topological triangulation of $S_2$. Depending on the choice of $h$, the image of edges are generally not geodesics and the underlying surface map has huge distortion. Our goal is to find an optimal embedding $f: (V,E) \to S_2$ of the 1-skeleton graph among all geodesic realizations within the homotopy class of $h\circ f^{\dagger}$.

Every geodesic realization of the graph to the hyperbolic surface $S_2$ is associated with the discrete Dirichlet energy
\begin{equation}
\label{eq:discrete_dirichlet_energy}
	D_c(f) = \frac{1}{2}\sum_{ij} c_{ij} \ell_{ij}^2,
\end{equation}
where \( c:E \to \mathbb{R}_{>0} \) with \( c_{ij}=c_{ji} \) is a prescribed system of edge weights, and \( \ell_{ij} \) is the hyperbolic length of the geodesic edge joining vertices \( i \) and \( j \) measured on $S_2$. A realization \( f \) is called a \emph{discrete harmonic map} if it is a critical point of the energy \( D_c \). Equivalently, it satisfies for every vertex \( i \in V \),
\begin{equation}\label{eq:discreteharmonic}
\sum_{j} c_{ij} \ell_{ij} U_{ij} = 0,
\end{equation}
where \( U_{ij} \) is the outward unit tangent vector to the geodesic from vertex \( i \) to vertex \( j \).

An example is the \textit{Tutte embedding}~\cite{tutte1963draw}, where a planar graph is embedded in the Euclidean plane such that each interior vertex is the weighted average of its neighbors. This concept extends naturally to surfaces with non-trivial topology \cite{CdV1991,Kotani2001,Toru2021,Gaster2018}. Colin de Verdière established the following foundational result.

\begin{theorem}[Colin de Verdière \cite{CdV1991}]
    Given a topological triangulation on a closed surface of non-positive curvature together with a choice of positive edge weights \( c \), there exists a unique discrete harmonic map \( f \) in the homotopy class of the triangulation. This map is the unique minimizer of the energy \( D_c \) in the given homotopy class. Furthermore, the discrete harmonic map is an embedding.
    \label{thm:colin_de_verdiere}
\end{theorem}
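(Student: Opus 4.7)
The plan is to lift the problem to the universal cover of the target, exploit the convexity of squared distance on a Hadamard manifold for existence and uniqueness, and then use a degree-theoretic argument for the embedding property.

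\textbf{Setup and existence.} The homotopy class of the triangulation selects a representation $\rho:\pi_1(S_2)\to\operatorname{Isom}(\widetilde S_2)$, and every geodesic realization in that class lifts to a $\rho$-equivariant map $\widetilde f$ determined by the positions of one vertex per $\pi_1$-orbit. The Dirichlet energy rewrites as
\begin{equation*}
D_c(\widetilde f)=\tfrac12\sum_{ij}c_{ij}\,d_{\widetilde S_2}\!\bigl(\widetilde f(i),\rho(\gamma_{ij})\widetilde f(j)\bigr)^2,
\end{equation*}
where $\gamma_{ij}\in\pi_1(S_2)$ encodes how the lifted edge winds. By the Cartan--Hadamard theorem, squared distance on a simply-connected non-positively curved space is convex along pairs of geodesics, so each $\ell_{ij}^2$, and hence $D_c$, is convex along geodesic interpolations of the vertex positions. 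Coercivity then follows: if a vertex position escapes to infinity in $\widetilde S_2$ while equivariance is maintained, some incident edge must be stretched, for otherwise the entire realization would remain bounded and $\rho(\pi_1(S_2))$ would admit a common fixed point or horoball, contradicting its faithful cocompact action on $\widetilde S_2$. A minimizer of $D_c$ therefore exists.

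\textbf{Uniqueness.} In a Hadamard manifold the function $t\mapsto d(\alpha(t),\beta(t))^2$ is strictly convex unless $\alpha$ and $\beta$ bound a flat strip. On a closed strictly negatively curved surface no flat strips exist, so $D_c$ is strictly convex along geodesic interpolations of the vertex positions, and the minimizer is unique; in the borderline flat case the same reasoning gives uniqueness modulo the residual translations compatible with the equivariance constraint.

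\textbf{Embedding, the main obstacle.} The harmonic equation \eqref{eq:discreteharmonic} together with positivity of the weights forces each $\widetilde f(i)$ to lie strictly inside the hyperbolic convex hull of its neighbors, which yields local injectivity around every vertex and preserves the cyclic order of incident edges. Globally, I would define the degree of the simplicial map $f:(V,E,F)\to S_2$ by counting preimages of a generic point, and interpolate linearly (in the universal cover) between the minimizer and the reference realization $f^\dagger$; the convex-hull placement at every vertex prevents any triangle from flipping orientation along the path, so the degree is constant and equals the degree of $f^\dagger$, which is one. Local injectivity together with degree one then yields a global embedding of $\widetilde f$, hence of its quotient $f$. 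The delicate step is ruling out the appearance of edge crossings and degenerate faces during the deformation; this is where the maximum-principle nature of \eqref{eq:discreteharmonic}, combined with the strict convexity of $D_c$ that forbids distinct critical configurations from coexisting, is indispensable.
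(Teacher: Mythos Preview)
Your existence and uniqueness arguments follow essentially the same line as the paper: lift to the universal cover, exploit convexity of squared distance on a Hadamard space along pairs of geodesics, and deduce uniqueness from strict convexity (the paper computes the second variation explicitly, but the content is the same). Your coercivity step is sketchier than the paper's explicit bound---the paper simply observes that $D_c(f)\le R$ forces every edge to have length at most $\sqrt{R/\min_e c_e}$, hence every vertex lies in a fixed compact set---but the idea is correct.

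The embeddedness argument, however, takes a different route and has a genuine gap. The paper does not argue via degree and homotopy; it uses a Gauss--Bonnet angle count. From the harmonic condition \eqref{eq:discreteharmonic} one gets $\sum_{jk}\alpha^i_{jk}\ge 2\pi$ at every vertex (this is your convex-hull observation), and from the hyperbolic area formula $|\area(ijk)|=\pi-(\alpha^i_{jk}+\alpha^j_{ki}+\alpha^k_{ij})\ge \area(ijk)$. Summing over all faces and comparing with $-2\pi\chi$ via the Euler relation $2|E|=3|F|$ forces every inequality to be an equality, so no triangle is flipped and no vertex has angle excess. Embeddedness follows.

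Your degree argument founders precisely at the step you flag as delicate. The convex-hull condition holds only at the harmonic map, not along the geodesic interpolation to $f^\dagger$, so it cannot be invoked to prevent flips along the path; in any case degree is already a homotopy invariant, so the interpolation is unnecessary for establishing degree one. The real issue is that even at the harmonic map itself, the convex-hull condition at a vertex does \emph{not} imply that every incident triangle is positively oriented: it says only that the outgoing tangent vectors are not confined to a half-plane, which is compatible with some faces being folded back so long as the net angular coverage is at least $2\pi$. Thus ``local injectivity around every vertex and preservation of cyclic order'' is asserted without proof, and degree one plus vertexwise convex-hull placement is insufficient to rule out flipped faces. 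Closing this requires a global counting argument of exactly the Gauss--Bonnet type the paper supplies.
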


To make the exposition self-contained and tailored to our setting, we reformulate the main ideas of the proof in the specific context of hyperbolic surfaces. \smallskip

\noindent\textbf{Existence.} A geodesic realization $f:(V,E) \to S$ is determined by vertex positions and the homotopy class. Fixing a spanning tree in the graph, each geodesic realization leads to a lift of the spanning tree to the universal cover $\mathbb{D}$ and can be regarded as an element in the product space $\mathbb{D}^{|V|}:=\mathbb{D} \times \mathbb{D} \dots \times \mathbb{D}$, where each component corresponds to the position of a vertex. Conversely, a geodesic realization can be recovered from an element in $\mathbb{D}^{|V|}$ via projecting the corresponding spanning tree on $\mathbb{D}$ to the surface $S$ and recovering the remaining edges by the homotopy data.

Two elements in $\mathbb{D}^{|V|}$ project to the same geodesic realization on $S$ if they differ by a hyperbolic isometry as a deck transformation and so we can always normalize the first vertex to lie in a compact fundamental domain $\mathcal{F} \subset \mathbb{D}$. It shows that the space of geodesic realizations within a fixed homotopy class is modeled by a manifold $\mathcal{F} \times \mathbb{D} \dots \times \mathbb{D}$ of dimension $2|V|$.

One can check that the Dirichlet energy is a proper functional on the space of geodesic realizations. Any two vertices can be connected via at most $|E|$ edges. Fixing any positive number $R$, we denote $B_R \subset \mathbb{D}$ the compact set 
\[
B_R:=\{ x \in \mathbb{D}| d_{\mathbb{D}}(x,y) \leq |E| \cdot \sqrt{\frac{R}{\min_{e \in E} c_e}} \text{ for some } y \in \mathcal{F}\}
\]
and observe that the closed subset $D_c^{-1}([-R, R]) \subset \mathcal{F} \times B_R \dots \times B_R$ is compact. Indeed, any geodesic realization $f$ lying out of $\mathcal{F} \times B_R \dots \times B_R$ indicates the existence of a geodesic edge $e$ with length larger than $\sqrt{\frac{ R}{\min_{e \in E} c_e}}$ and hence the energy
\[
D_c(f) > c_e \cdot (\sqrt{\frac{R}{\min_{e \in E} c_e}})^2 = R.
\]
Because $D_c^{-1}([-R, R])$ is compact for any $R>0$, we conclude that the Dirichlet energy is a proper functional on the space of geodesic realizations.

Since the energy is always positive, its infimum is finite. There exists a sequence of geodesic realization with decreasing energy that approaches the infimum. Because $D_c^{-1}([0, R])$ is compact for any $R>0$, the sequence converges to a geodesic realization that minimizes the energy and hence is a discrete harmonic map. \smallskip

\noindent\textbf{Uniqueness.} Within the fixed homotopy class, the discrete harmonic map exists uniquely. One observes that given a geodesic realization $f$, we can parametrize each geodesic edge $ij$ via $f_{ij}(s)$ with constant speed, where $s\in [0,1]$. In this way, the Dirichlet energy can be rewritten as
\[
D_c(f_t)=\sum_{ij} c_{ij} \int_0^1 || \frac{\partial f_{t,ij}}{\partial s}||^2 ds.
\]
Suppose there are two discrete harmonic maps \( f_0 \) and \( f_1 \) in the same homotopy class. They can be connected by a family of realizations $f_t$ for $t \in [0,1]$ such that for every edge $ij$ and $s\in [0,1]$, $f_{t,ij}(s)$ is a constant-speed geodesic joining $f_{0,ij}(s)$ and $f_{1,ij}(s)$ as $t\in [0,1]$ varies. Writing the vector field $W:= \frac{\partial f_t}{\partial t}$, one has the second derivative (See \cite[Lemma A.1]{Toru2021})
\[
\frac{d^2}{dt^2} D_c(f_t) =\sum_{ij} c_{ij} \int_0^1 (|| \nabla_{\frac{\partial}{\partial s}} W_{ij}||^2 + ||W^{\perp}_{ij}||^2 || \frac{\partial f_{t,ij}}{\partial s}||^2) ds \geq 0.
\]
where $\nabla$ is the Levi-Civita connection and $W^{\perp}$ is the component of $W$ that is perpendicular to the geodesic. Since $\frac{d}{dt} D_c(f_t)|_{t=0}=0=\frac{d}{dt} D_c(f_t)|_{t=1}$, one can deduce that the vector field $W$ is trivial for all $t$ and hence $f_0=f_1$. \smallskip

\noindent\textbf{Embeddedness.} The Gauss–Bonnet theorem implies that adjacent faces remain unfolded under a discrete harmonic map. Suppose no edges degenerate. For a geodesic triangle $ijk$, we denote $\alpha^{i}_{jk},\alpha^{j}_{ki},\alpha^{k}_{ij} \in [0,\pi)$ the corner angles of the triangle. Equation \eqref{eq:discreteharmonic} yields that at every vertex $i$, the outward tangent vectors to the geodesics cannot be contained in any half space of the tangent plane and hence
\begin{align}\label{eq:vertex}
    \sum_{jk} \alpha^{i}_{jk} \geq 2 \pi
\end{align}
where the sum is over all faces $ijk$ that contains vertex $i$. We denote $\area(ijk)$ the signed area of the hyperbolic triangle $ijk$ under the discrete harmonic map. The Gauss-Bonnet theorem yields
\begin{equation}\label{eq:face}
    \pi - (\alpha^{i}_{jk}+\alpha^{j}_{ki}+\alpha^{k}_{ij})=| \area(ijk)| \geq \area(ijk)
\end{equation}
where the signed area is negative if the orientation of $ijk$ induced from $S_1$ is reversed under the discrete harmonic map to $S_2$. Summing Equation \eqref{eq:face} over all faces yields
\begin{align*}
    \sum_{ijk}(\pi - (\alpha^{i}_{jk}+\alpha^{j}_{ki}+\alpha^{k}_{ij})) \geq \sum_{ijk} \area(ijk) = -2\pi \chi
\end{align*}
where $\chi=|V|-|E|+|F|$ is the Euler characteristic. Together with equation \eqref{eq:vertex}, it implies
\begin{align*}
    2\pi \chi \geq -\pi |F| + \sum_i \sum_{jk} \alpha^{i}_{jk} \geq -\pi |F| + 2\pi |V| \geq 2\pi \chi
\end{align*}
since $2|E|=3|F|$ holds for a triangulation. Thus, all the equalities hold and adjacent faces remain unfolded under a discrete harmonic map. \smallskip

\noindent\textbf{Gradient flow.} Since the energy function $D_c$ is non-negative, proper, and continuously differentiable on a finite-dimensional manifold, its negative gradient flow exists for all time and converges to the unique discrete harmonic map in a homotopy class. Moreover, the formula for the second derivative of $D_c$ shows that the Hessian at the discrete harmonic map is positive definite~\cite[Theorem 2.1]{Toru2021}. A consequence of this fact is that as long as $D_c$ is bounded through the flow line, the negative gradient flow of $D_c$ will converge to a critical point and the convergence is exponentially fast as $t\to\infty$. For details and proofs of this result, the interested readers are suggested to refer to the book~\cite{jost2008riemannian}. \medskip

Analogous results on Euclidean tori (genus $g=1$) hold as well, but with different arguments (See \cite{gortler2006discrete,Lin2020}).

\subsubsection{Geometric Edge Weights}  

The definition of a discrete harmonic map depends critically on the choice of edge weights $c:E \to \mathbb{R}$. Different choices of weights can lead to different homeomorphisms between the surfaces. A good choice of edge weights should reflect the geometry of the input surface $S_1$ and yield a structure-preserving discretization, in the sense that it satisfies the following criteria:
\begin{enumerate}
         \item \textbf{Injectivity}: All edge weights must be positive to ensure that the resulting discrete harmonic map is injective.  

      \item \textbf{Harmonicity of isometries}: 
      If $S_2$ is isometric with $S_1$ via an isometry $h:S_1 \to S_2$, then the composition $h \circ f^{\dagger}$ is discrete harmonic map.

\end{enumerate}

The uniformization theorem allows us to focus on geometric edge weights derived from constant-curvature metrics and the criteria naturally lead to a unique, canonical choice of such edge weights \cite{Lam2022,Lam2024harmonic}. In the hyperbolic setting, these edge weights arise as a natural generalization of the classical \emph{cotangent weights} in the Euclidean case. The proofs involved characterizing the minimizer of Dirichlet energy of discrete harmonic maps over the space of marked hyperbolic metrics. 

\medskip

\noindent \textbf{Euclidean metrics ($K = 0$).}  
For tori $(g=1)$, the uniformization theorem ensures that the conformal structure is represented by a Euclidean metric. It is shown in \cite{Lam2022} that the criterion on harmonicity of isometries selects the standard choice of edge weights, known as the \emph{cotangent weights} $c:E \to \mathbb{R}$ defined by (See Figure \ref{fig:orientation} for notations)
\begin{align}
\begin{split}
c_{ij} &:= \frac{1}{2} \left( \tan\left( \frac{\alpha^{i}_{jk} + \alpha^{j}_{ki} - \alpha^{k}_{ij}}{2} \right) 
+ \tan\left( \frac{\alpha^{i}_{lj} + \alpha^{j}_{il} - \alpha^{l}_{ji}}{2} \right) \right) \\
&= \frac{1}{2} \left( \cot \alpha^{k}_{ij} + \cot \alpha^{l}_{ji} \right).
\end{split}
\end{align}
These weights arise naturally from the finite element discretization of the Dirichlet energy \cite{pinkall1993computing} and discrete harmonic functions in electric networks \cite{Duffin1959}.

\medskip

\noindent \textbf{Hyperbolic metrics ($K = -1$).}  
For closed surfaces of genus $g \geq 1$, the uniformization theorem guarantees a unique hyperbolic metric in each conformal class. In this case, the criterion on harmonicity of isometries determines a canonical choice of edge weights \cite{Lam2024Lap,Lam2024harmonic}:
\begin{align}
c_{ij} := \left( \tan\left( \frac{\alpha^{i}_{jk} + \alpha^{j}_{ki} - \alpha^{k}_{ij}}{2} \right) 
+ \tan\left( \frac{\alpha^{i}_{lj} + \alpha^{j}_{il} - \alpha^{l}_{ji}}{2} \right) \right)
\cdot \frac{\tanh\left( \frac{\ell_{ij}}{2} \right)}{\ell_{ij}}.
\label{eq:cij}
\end{align}
This formula generalizes the Euclidean cotangent weights to the hyperbolic setting.

\begin{figure}
    \centering
	\includegraphics[width=0.55\textwidth]{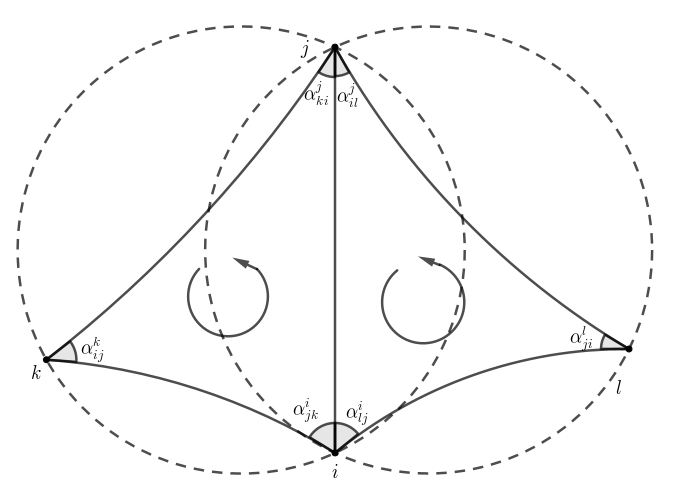}
    \caption{Two neighboring hyperbolic triangles share a common edge $ij$.}
    \label{fig:orientation}
\end{figure}

\subsubsection{Injectivity -- Delaunay Condition}  
The positivity of the canonical edge weights is equivalent to the condition that the triangulation of the input surface is \emph{Delaunay}.

A geodesic triangulation is Delaunay if it satisfies the empty circle condition: the circumcircle of each face contains no vertex of an adjacent face in its interior. This condition is equivalent to requiring that, for every edge shared by two triangles $ijk$ and $jil$, the following inequality holds:
\[
\frac{\alpha^{i}_{jk} + \alpha^{j}_{ki} - \alpha^{k}_{ij}}{2} + \frac{\alpha^{i}_{lj} + \alpha^{j}_{il} - \alpha^{l}_{ji}}{2} \geq 0
\]
On the other hand, the sum of three angles in a triangle is equal to $\pi$ (Euclidean) or strictly less than $\pi$ (hyperbolic). Thus the Delaunay condition is equivalent to the positivity of edge weights.

\subsubsection{Harmonicity of isometries}

A key feature that distinguishes our approach from previous work is the use of canonical edge weights, which ensure that isometries remain harmonic in the discrete theory, reflecting their classical behavior.

\begin{theorem}[\cite{Lam2024harmonic}]
    Let $S_1$ by a hyperbolic surface equipped with a geodesic triangulation $f^{\dagger}:(V,E)\to S_1$ and $c:E \to \mathbb{R}$ be the canonical edge weight. Suppose the target surface $S_2$ has the same hyperbolic structure and identified via an isometry $h:S_1 \to S_2$. Then $h\circ f^{\dagger}$ is a discrete harmonic map to $S_2$ with respect to the canonical edge weight. 
\end{theorem}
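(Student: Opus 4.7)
The plan is to use the isometry to transfer the problem back to $S_1$ and then verify a vertex-local identity in the Poincar\'e disk. Because $h$ is a hyperbolic isometry, every edge length $\ell_{ij}$ and every corner angle of the geodesic triangulation $h\circ f^{\dagger}$ on $S_2$ agrees with the corresponding quantity for $f^{\dagger}$ on $S_1$; in particular the canonical weights \eqref{eq:cij} agree, and the harmonic equation $\sum_j c_{ij}\ell_{ij}U_{ij}=0$ at $h(f^{\dagger}(i))$ in $S_2$ is carried by $dh$ to the same equation at $f^{\dagger}(i)$ in $S_1$. Thus the theorem reduces to verifying, for the input triangulation $f^{\dagger}$ and the canonical weights, that $\sum_j c_{ij}\ell_{ij}U_{ij}=0$ at every vertex $i$ of $S_1$.

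To check this at a vertex $i$, I would lift the star of $i$ to the universal cover and place $i$ at the origin of the Poincar\'e disk $\D$. Under this placement the vector $\tanh(\ell_{ij}/2)\,U_{ij}$ coincides with the Euclidean position $\tilde{j}$ of the lifted neighbor. Regrouping the harmonic sum so that each tangent factor in \eqref{eq:cij} is attributed to the triangle that produced it, the identity to be proved becomes
\begin{equation*}
\sum_{ijk\ni i}\Bigl(\tan\tfrac{A+B-C}{2}\,\tilde{j}\;+\;\tan\tfrac{A+C-B}{2}\,\tilde{k}\Bigr)=0,
\end{equation*}
where $A,B,C$ are the corner angles of the face $ijk$ at $i,j,k$ and the sum runs over faces at $i$.

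The strategy to prove this identity is to exhibit the face contribution as an exact difference, so that the cyclic sum around $i$ telescopes. Writing $\tilde{j},\tilde{k}$ in polar form at $i$, with angular coordinates determined by the successive face angles, and using the hyperbolic laws of cosines and sines to eliminate $B,C$ in terms of edge lengths, I would look for a vector $X_{ij}$ depending only on the edge $ij$ (its length and tangent direction at $i$) such that each face contribution decomposes as $X_{ij}-X_{ik}$; cyclic closure around $i$ then gives the vanishing. The main obstacle is locating the correct $X_{ij}$: in the Euclidean case $(A+B-C)/2$ reduces to $\pi/2-C$ and the analogue collapses by Euclidean rigidity, whereas in hyperbolic geometry the angle defect $\pi-A-B-C$ enters and the trigonometric manipulation becomes substantially more delicate. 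An alternative route, which sidesteps this computation, is to follow the derivation in \cite{Lam2024harmonic}: the formula \eqref{eq:cij} is characterized there as the unique locally defined weight --- one depending only on the two adjacent face angles and the edge length --- for which identity realizations are critical points of $D_c$; the present theorem is then a direct consequence of that characterization.
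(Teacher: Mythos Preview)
The paper does not supply a proof of this theorem: it is stated with a citation to \cite{Lam2024harmonic} and nothing more. Your closing alternative---invoking the characterization of the canonical weights from \cite{Lam2024harmonic}---is therefore exactly what the paper does, and is acceptable as a ``proof'' in this context. Note, however, that it borders on circular, since the theorem you are proving is itself the content being cited; you would want to be precise that \cite{Lam2024harmonic} first derives formula~\eqref{eq:cij} from the harmonicity requirement rather than the other way around.

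Your direct attempt goes further than the paper and is correctly set up. The reduction by the isometry $h$ is immediate, the placement of $i$ at the origin of $\D$ so that $\tanh(\ell_{ij}/2)\,U_{ij}$ equals the Euclidean position $\tilde{j}$ is right, and the regrouping of the harmonic sum by faces is accurate. But the argument stops at the crucial point: you propose to find an edge-vector $X_{ij}$ making each face contribution an exact difference, and you candidly say you do not know what $X_{ij}$ is. That is the entire content of the identity, so as written this is a plan rather than a proof. If you want to complete it, one workable route is to rotate the face contribution by $\pi/2$ and show that it equals a scalar multiple of $\tilde{k}-\tilde{j}$ (the chord along the link of $i$), which telescopes around the star; the scalar involves the half-angle defect of the face, and the verification reduces to the hyperbolic law of sines together with the half-angle substitution $|\tilde{j}|=\tanh(\ell_{ij}/2)$. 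Without that computation or an equivalent one, the direct route remains incomplete.
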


\subsection{Discrete Uniformization}

Given a triangulated surface $(S,\ell)$ in space with prescribed edge lengths, a remaining problem is to compute a hyperbolic surface in the same conformal class, equipped with a Delaunay geodesic triangulation. We adopt the framework of discrete conformal geometry to address this challenge.

In discrete conformal geometry \cite{Luo2004,Bobenko2015,Gu2018}, two piecewise Euclidean metrics $(S,\ell)$ and $(S,\tilde{\ell})$ with the same number of vertices are said to be discretely conformally equivalent if they are related by a sequence of vertex scalings and edge flips that preserve Delaunay triangulations. Specifically, there exists a sequence of triangulations $T_0, T_1, \dots, T_k$, where each $T_{i+1}$ is obtained from $T_i$ by an edge flip, such that within each fixed triangulation, the edge lengths are related by a vertex-based scaling $u:V \to \mathbb{R}$ satisfying
\[
\tilde{\ell}_{ij} = e^{\frac{1}{2}(u_i + u_j)} \ell_{ij}.
\]
This vertex scaling mimics the conformal equivalence of Riemannian metrics in the smooth setting.

An alternative, more geometric, viewpoint is to associate each piecewise linear metric with a complete hyperbolic metric with cusps. In this perspective, two piecewise linear metrics are discretely conformally equivalent if they correspond to the same complete hyperbolic metric with cusps \cite{Bobenko2015}. This interpretation allows the notion of discrete conformal equivalence to be extended naturally to piecewise hyperbolic metrics.

The discrete uniformization problem is then to find, within the discrete conformal class of a given triangulated surface, a metric of constant negative curvature together with a Delaunay triangulation.

\begin{theorem}[Gu--Guo--Luo--Sun--Wu {\cite{Gu2018}}]
Let $(S,\ell)$ be a closed triangulated surface of genus $g \geq 2$, equipped with a piecewise Euclidean metric. Then there exists a unique hyperbolic surface equipped with a Delaunay geodesic triangulation that is discretely conformally equivalent to $(S,\ell)$.
\end{theorem}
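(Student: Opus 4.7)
The plan is to follow the variational strategy of Luo, Bobenko--Pinkall--Springborn and Gu--Luo, reducing the problem to minimizing a convex functional on a vector of discrete conformal factors. I would first interpret each triangle of $(S,\ell)$ as a decorated ideal hyperbolic triangle (with horocycles around its three ideal vertices), so that the initial edge lengths record signed horocyclic distances. A conformal factor $u:V\to\mathbb{R}$ rescales the horocycles and, together with a compatible sequence of edge flips, produces a piecewise hyperbolic metric discretely conformally equivalent to $(S,\ell)$. The theorem then reduces to finding the unique $u^\ast$ for which the resulting metric is actually smooth, i.e.\ for which the cone angle at every vertex equals $2\pi$.

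To make this variational, I would write $\alpha^i_{jk}(u)$ for the hyperbolic angle at vertex $i$ in the rescaled face $ijk$ and define the discrete curvature
\[
K_i(u) := 2\pi - \sum_{jk \ni i} \alpha^i_{jk}(u).
\]
On each open cell where a single triangulation is both valid (the hyperbolic triangle inequalities hold) and Delaunay, the 1-form $\sum_i K_i \, du_i$ is closed --- this uses the symmetry $\partial \alpha^i_{jk}/\partial u_j = \partial \alpha^j_{ki}/\partial u_i$, a classical consequence of the hyperbolic cosine law --- so it has a local primitive $E(u)$ (the discrete Ricci/hyperbolic Milnor energy). A second calculation shows that the Hessian $-\partial K/\partial u$ is positive definite on each cell, making $E$ strictly convex there.

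The main technical obstacle is extending $E$ to a globally defined, $C^1$, strictly convex, and proper function on all of $\mathbb{R}^{|V|}$. The tool is the hyperbolic Ptolemy identity governing discrete conformal edge flips: when two adjacent triangles violate the Delaunay condition at some value of $u$, flipping the shared edge produces a new triangulation with matching edge lengths along the wall, and the curvature vector $K(u)$ is independent of which Delaunay triangulation represents $u$. Piecing the local primitives together across these walls yields a globally $C^1$ convex function whose convexity is strict because strict convexity on each open cell survives the gluing. Properness is the other delicate point and is established by a Gauss--Bonnet style argument: as $\|u\|\to\infty$, either some hyperbolic triangle degenerates and its angles force $K_i$ to grow, or the total area of the surface is pushed away from its forced value $-2\pi\chi(S)>0$, both of which drive $E(u)\to+\infty$.

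Given this extension, the conclusion is immediate. Strict convexity and properness imply that $E$ attains its unique minimizer $u^\ast$, which satisfies $\nabla E(u^\ast) = K(u^\ast) = 0$. Uniqueness of $u^\ast$ gives uniqueness of the resulting hyperbolic structure up to isometry; the vanishing of every $K_i(u^\ast)$ means there are no cone singularities, so the piecewise hyperbolic metric glues into a genuine hyperbolic metric on $S$; and the Delaunay triangulation is the one recorded by the cell in the flip decomposition that contains $u^\ast$. Existence and uniqueness thus follow simultaneously from a single convex optimization statement, with the extension across flip walls being the step that carries the real technical weight.
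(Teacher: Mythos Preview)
The paper does not prove this theorem at all: it is quoted verbatim as a result of Gu--Guo--Luo--Sun--Wu and cited as \cite{Gu2018}, with no argument supplied. There is therefore no ``paper's own proof'' to compare your proposal against.

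That said, your sketch is a faithful outline of the variational strategy actually used in the cited reference (and in the closely related work of Bobenko--Pinkall--Springborn): pass to decorated ideal triangles, build a strictly convex energy whose gradient is the discrete curvature vector, use the Ptolemy relation to glue the energy $C^1$-smoothly across edge-flip walls so that the domain becomes all of $\mathbb{R}^{|V|}$, and then argue properness to force a unique critical point with $K_i\equiv 0$. The one place where your outline is thinnest is properness: in the actual proof this is the hardest step and does not follow from a one-line Gauss--Bonnet heuristic. One has to control what happens along rays $u\to\infty$ where infinitely many flips may accumulate, and the argument in \cite{Gu2018} goes through a careful analysis of degenerations of decorated ideal polyhedra rather than a direct angle-sum estimate. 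If you were to expand your sketch into a proof, that is the step requiring the most work.
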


Via discrete uniformization, we obtain a canonical Delaunay triangulation on a hyperbolic surface. This provides a natural setting in which we could compute discrete harmonic maps from it to other surfaces.

\section{Algorithm}

The input data of our algorithm is a compact and connected triangulated surface $(V,E,F)$ without boundary, whose genus is at least $2$. In addition, this surface is equipped with a flat discrete hyperbolic metric $\ell:E\to\R_{>0}$. To obtain a flat hyperbolic metric, it is quite safe to apply the discrete hyperbolic Ricci flow algorithm~\cite{jin2008discrete}, whose convergence is proved in~\cite{chow2003combinatorial}. With the metric $\ell$, we can compute the hyperbolic edge weight $c_{ij}$ through the formula~\eqref{eq:cij}. In case the initial surface is not a Delaunay triangulation, the weight of some edges may be negative. To ensure that the final map is an embedding, we will reset the weight of these edges to a prescribed positive number. The output of this algorithm is expected to be a discrete harmonic map of $f:(V,E)\to S$ onto a target hyperbolic surface $S$, which minimizes the Dirichlet energy~\eqref{eq:discrete_dirichlet_energy}. 

\subsection{Representation of the target surface}
To construct a target surface, we recall that by the uniformization theorem, a closed hyperbolic surface is the quotient space $\mathbb{D}/\Gamma$ for some faithful representation of the fundamental group to a Fuchsian group $\Gamma$, which is a subgroup of hyperbolic isometries. It is not necessary for us to construct a Fuchsian group directly. Instead, we could rely on a famous theorem by Poincaré, which states that if $\bar{R}$ is a finite-sided closed convex polygon whose sides are identified in pairs by isometries $G_0=\{g_1,\cdots,g_r\}$ of the Poincaré disk, and suppose the cycle condition holds, then $G_0$ generates a discrete subgroup $\Gamma$ of the isometry group of $\mathbb{D}$, and $R$ is a fundamental domain under the action of $\Gamma$. Here, a convex polygon should be understood in the sense of hyperbolic geometry, which means that its edges are geodesics in the Poincaré disk $\mathbb{D}$ and it is a geodesically convex domain in $\mathbb{D}$. To explicitly get a paired hyperbolic polygon, we could rather manually construct one or rely on some existing methods such as the hyperbolic Ricci flow algorithm~\cite{jin2008discrete}, which can output the canonical polygon of a hyperbolic surface with respect to some flat hyperbolic metric. 

\begin{figure}[t]
\centering
  \includegraphics[width=0.99\linewidth]{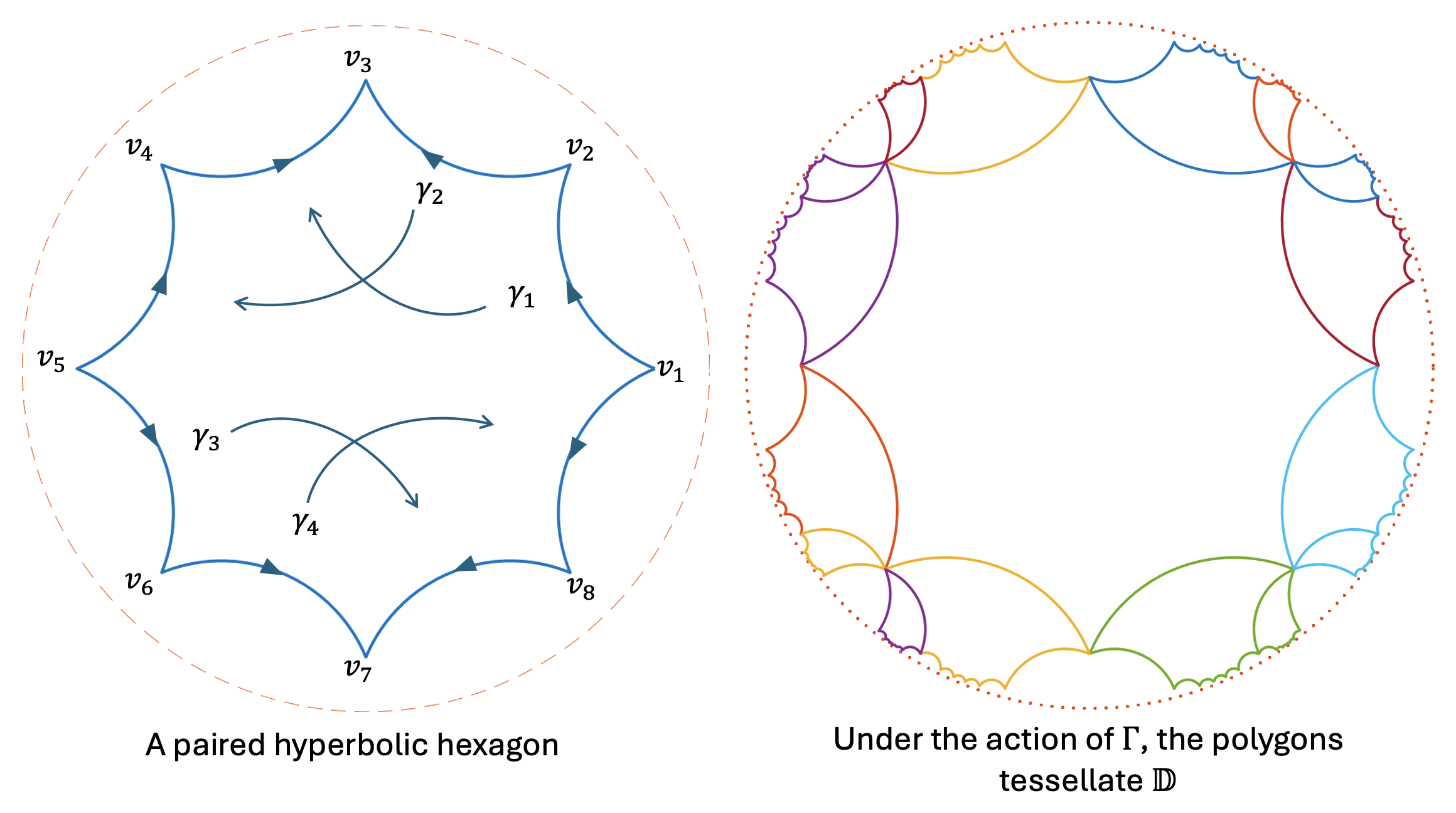}
\caption{Construction of a hyperbolic hexagon with paired edges.}
\label{fig:hyperbolic_octagon}
\end{figure} 

Here, we give an example of a genus-$2$ surface constructed in this way, which we will use as a target surface in the numerical experiments. Let $\bar{R}$ be the closed hyperbolic octagon with vertices $v_k = se^{i(k-1)\pi/4},k=1,\cdots,8$, where $s>0$ is uniquely determined so that the sum of the internal angles of $\bar{R}$ is $2\pi$. As shown in the figure~\ref{fig:hyperbolic_octagon}, the vertices of $R$ are named $v_1,\cdots,v_8$, in counterclockwise order. To make $R$ topologically a genus-$2$ surface, the sides $r_1,r_2,r_5$ and $r_6$ are paired with $r_3^{-1},r_4^{-1},r_7^{-1}$ and $r_8^{-1}$ respectively. The side-pairing transformation between $r_1$ and $r_3^{-1}$ is the unique element $\gamma_1\in \mbox{Aut}(\D)$ satisfying
\begin{equation*}
    \gamma_1(v_1) = v_4\ \ \text{and}\ \ \gamma_1(v_2) = v_3.
\end{equation*}
The other side-pairing transformations $\gamma_2,\gamma_3,\gamma_4\in\mbox{Aut}(\D)$ are given in a similar way. These transformations and their inverses generate a discrete subgroup $\Gamma$ of $\mbox{Aut}(\D)$. It is easy to see that one representative of $\D/\Gamma$ is just $\bar{R}$ with half the boundary points removed. Heuristically, this means that the polygon $R$ tessellates $\D$ under the action of $\Gamma$. The vertices $\{v_k:1\leq k\leq8\}$ can be paired by an elliptic cycle
\begin{equation*}
    v_1 \xrightarrow{\gamma_1} v_4 \xrightarrow{\gamma_2^{-1}} v_3 \xrightarrow{\gamma_1^{-1}} v_2 \xrightarrow{\gamma_2} v_5 \xrightarrow{\gamma_3} v_8 \xrightarrow{\gamma_4^{-1}} v_7 \xrightarrow{\gamma_3^{-1}} v_6 \xrightarrow{\gamma_4} v_1.
    \label{eq:elliptic_cycle}
\end{equation*}

Besides, it is well known that the fundamental group of a hyperbolic surface, the group of deck transformations of its universal cover, and the Fuchsian group that generates it are isomorphic to each other. So, for the sake of simplicity, we will commonly denote all of the three groups by $\Gamma$ in the remaining article.

\subsection{A Riemannian Gradient Descent Algorithm}
Now, suppose that we have fixed the target surface $S$ as an appropriate hyperbolic polygon $\bar{R}$ with sides $\{r_1,\cdots,r_{4g}\}$ and a Fuchsian group $\Gamma$ generated by the prescribed side-pairings, where $g$ is the genus of the initial surface. We immediately notice that it is impossible to embed $(V,E,F)$ directly into $\D$ because $\D$ is simply-connected while $(V,E,F)$ is not. Our resolution is to lift a map $f:(V,E,F)\to\D/\Gamma$ to the developing map from its universal cover to $\D$. As a result, the first step of our algorithm is to find a homology basis of $(V,E,F)$ and cut it along this basis to produce a simply-connected triangulation $(\tilde{V},\tilde{E},F)$. $(V,E)$ and $(\tilde{V},\tilde{E})$ are related by the canonical projections $\pi_V:\tilde{V}\to V$ and $\pi_E:\tilde{E}\to E$. For better demonstration of our algorithm, we require that the homology basis be based at one given point, and that any two different loops can only cross at the base point. In terms of the developing map $\tilde{f}:(\tilde{V},\tilde{E})\to\D$, we aim to minimize the energy
\begin{equation}
    D_c(\tilde{f}) = \frac{1}{2}\sum_{ij\in E} c_{ij} \ell_{ij}^2
\end{equation}
over all geodesic realizations $\tilde{f}$ of $(\tilde{V},\tilde{E})$ into $\D$, which satisfy the hard constraint that for any $i\in\tilde{V}$ and $\gamma\in\pi_1(S)$, we have
\begin{equation}
    \tilde{f}(\gamma(i)) = \gamma(\tilde{f}(i))\ \ \text{for some } \gamma \in \Gamma.
    \label{eq:hard_constraint}
\end{equation}
For any $\tilde{f}$ satisfying the constraint, the image of $\tilde{f}$ tessellates $\D$ under the action of $\Gamma$, hence giving the developing map from the universal cover $(V,E)$ to $\D$. The initial map $\tilde{f}_0$ can be found by an Euclidean approach. Taking the preimages of the base point as division points, we can separate the boundary of $(\tilde{V},\tilde{E})$ into almost disjoint segments $s_1,s_2,\cdots,s_{4g}$. By some suitable re-ordering, we could assume that $s_i$ is paired with $s_j$ if $r_i$ is paired with $r_j$. By prescribing a boundary condition $\restr{\tilde{f}}{\partial (\tilde{V},\tilde{E})}$ satisfying $\tilde{f}(s_i)=r_i$ for each $i$ and
\begin{equation}
    \tilde{f}(s_i) = \gamma\circ\tilde{f}(s_j)
\end{equation}
if $r_i$ and $r_j$ is paired by $\gamma\in \Gamma$, we can compute a Euclidean harmonic map from $(\tilde{V},\tilde{E})$ to $R$, with respect to some Euclidean edge weights. With the initial map $\tilde{f}_0$, our goal is to find a one-parameter family of developing maps $\tilde{f}_t$ so that $\tilde{f}_t$ satisfy the hard constraint~\eqref{eq:hard_constraint} for each time $t$, and $\tilde{f}_t$ converges to the desired hyperbolic harmonic map as $t\to\infty$. One potential approach is to minimize $D_c(\tilde{f})$ by gradient descent. Let $u_i = (x_i,y_i)$ and $u_j=(x_j,y_j)$ be two points in $\D$. We denote by $\ell_{ij}$ the length of the hyperbolic geodesic connecting $u_i$ and $u_j$. Using the formula for hyperbolic distance~\eqref{eq:hyperbolic_distance}, we take the usual partial derivative in $\mathbb{R}^2$, and get
\begin{equation}
    \dfrac{\partial \ell_{ij}}{\partial x_i} = \dfrac{4}{\sqrt{(1+\delta(u_i,u_j))^2-1}}\dfrac{\big((x_i-x_j)(1-\norm{u_i}^2)+x_i\norm{u_i-u_j}^2\big)(1-\norm{u_j}^2)}{(1-\norm{u_i}^2)^2(1-\norm{u_j}^2)^2}
\end{equation}
where
\begin{equation*}
    \delta(u_i,u_j) = 2\,\dfrac{\norm{u_i-u_j}^2}{(1-\norm{u_i}^2)(1-\norm{u_j}^2)}.
\end{equation*}
The formula giving partial derivatives with respect to $x_j$, $y_i$, and $y_j$ are similar. By observing the formula, we instantly notice a significant problem. That is, when $\norm{u_i}$ approaches $1$, the denominator of $\partial \ell_{ij}/\partial x_i$ will become very large. As a consequence, when the step size is not small enough, a point near $\partial\mathbb{D}$ is very likely to go out of $\mathbb{D}$ after a step of gradient descent, which makes the algorithm very unstable. On the other hand, we recall a basic fact that a hyperbolic geodesic with any initial point in $\mathbb{D}$ and any initial direction is well-defined for any $t>0$. This motivates us to design an optimization algorithm under the Riemannian setting. From now on, we suppose that all the differential operators we use are compatible with the Riemannian structure $(\D,g)$. Again, we let $\ell_{ij}$ denote the geodesic distance between $u_i$ and $u_j$. It can be easily seen that
\begin{equation}
    \frac{1}{2}\dfrac{\partial \ell^2_{ij}}{\partial u_i} = \ell_{ij}U_{ij}
\end{equation}
where $U_{ij}\in T_{u_i}\D$ is the unit tangent vector at $u_i$ of the geodesic traveling from $u_i$ to $u_j$. We must be more careful when taking partial derivatives of the energy $D_c(\tilde{f})$. The reason is that the summation is taken over all edges $ij\in E$ while $\ell_{ij}$ in the summand is the geodesic distance between $\tilde{f}(i)$ and $\tilde{f}(j)$ as for the developing map $\tilde{f}:\tilde{V}\to\D$. Thus, we need to discuss the following $3$ cases. First, we suppose that a vertex $i$ is an interior vertex of $(\tilde{V},\tilde{E})$. This case is trivial because the edges $ij\in\tilde{E}$ incident with $i\in\tilde{V}$ are exactly the edges $ij\in E$ incident with $i\in V$. As a result, we get
\begin{equation}
    \dfrac{\partial D_c(\tilde{f})}{\partial\tilde{f}(i)} = \sum_{j\sim i,j\in\tilde{V}} c_{ij}\ell_{ij}U_{ij}.
\end{equation}
For the other two cases, we assume that $i$ is a boundary vertex of $(\tilde{V},\tilde{E})$. Under this assumption, the edges $ij\in\tilde{E}$ incident with $i\in\tilde{V}$ do not match the edges incident with $\pi_V(i)$ in $(V,E)$. The basic idea to tackle this problem is as follows. Let $\pi$ denote the covering map from $\D$ to $(V,E,F)$, which is induced by $\tilde{f}$. Also, let $U_i$ denote the set consisting of all edges incident with $\pi_V(i)$ in $(V,E)$. Then, there exists a discrete space $D_i$ such that $\pi^{-1}(U_i)=\sqcup_{d\in D_i}W_d$. Among them, there is a unique one containing $\tilde{f}(i)$ and we denote it by $\tilde{W}$. The partial derivative with respect to $\tilde{f}(i)$ is then given by
\begin{equation}
    \dfrac{\partial D_c(\tilde{f})}{\partial\tilde{f}(i)} = \sum_{ij\in\tilde{W}} c_{ij}\ell_{ij}U_{ij} \ \ \text{if }i\in\partial\,(\tilde{V},\tilde{E},F).
    \label{eq:gradient_vector}
\end{equation}
Algorithmically, we divide this situation into two cases, depending on whether $\pi_V(i)$ is the base point of the homology basis along which we cut $(V,E)$. If $\pi_V(i)$ is not the base point, we further suppose that $i\in s_{k_1}$, which is paired with $s_{k_2}$ through $s_{k_2} = \gamma(s_{k_1})$ for some $\gamma\in \Gamma$. It is easy to see that for each interior vertex $j\in\tilde{V}$ in the neighborhood of $\gamma(\tilde{f}(i))$, $\gamma^{-1}(\tilde{f}(j))$ belongs to the neighborhood of $\tilde{f}(i)$ in the universal cover. The edges formed in this way, together with the edges connected to $i$ in $(\tilde{V},\tilde{E})$, correspond to the set $\tilde{W}$ mentioned above. On the other hand, if $\pi_V(i)$ is a base point, we can find a representative in $\tilde{E}$ for each edge incident with $\pi_V(i)$ in $(V,E)$. Each representative is incident with a point $j\in\tilde{V}$ such that $\pi_V(j)$ is the base point. Recall that all the points $j\in\tilde{V}$ corresponding to the base point can be mapped to $i$ through an elliptic cycle. An example of the elliptic cycle of a genus-$2$ surface is given in the previous text~\eqref{eq:elliptic_cycle}. Using the elliptic cycle, each representative can be mapped to an edge in $\tilde{W}$. Together, they give the partial derivative of $D_c(\tilde{f})$ with respect to $\tilde{f}(i)$. 

\begin{figure}[t]
\centering
  \includegraphics[width=0.99\linewidth]{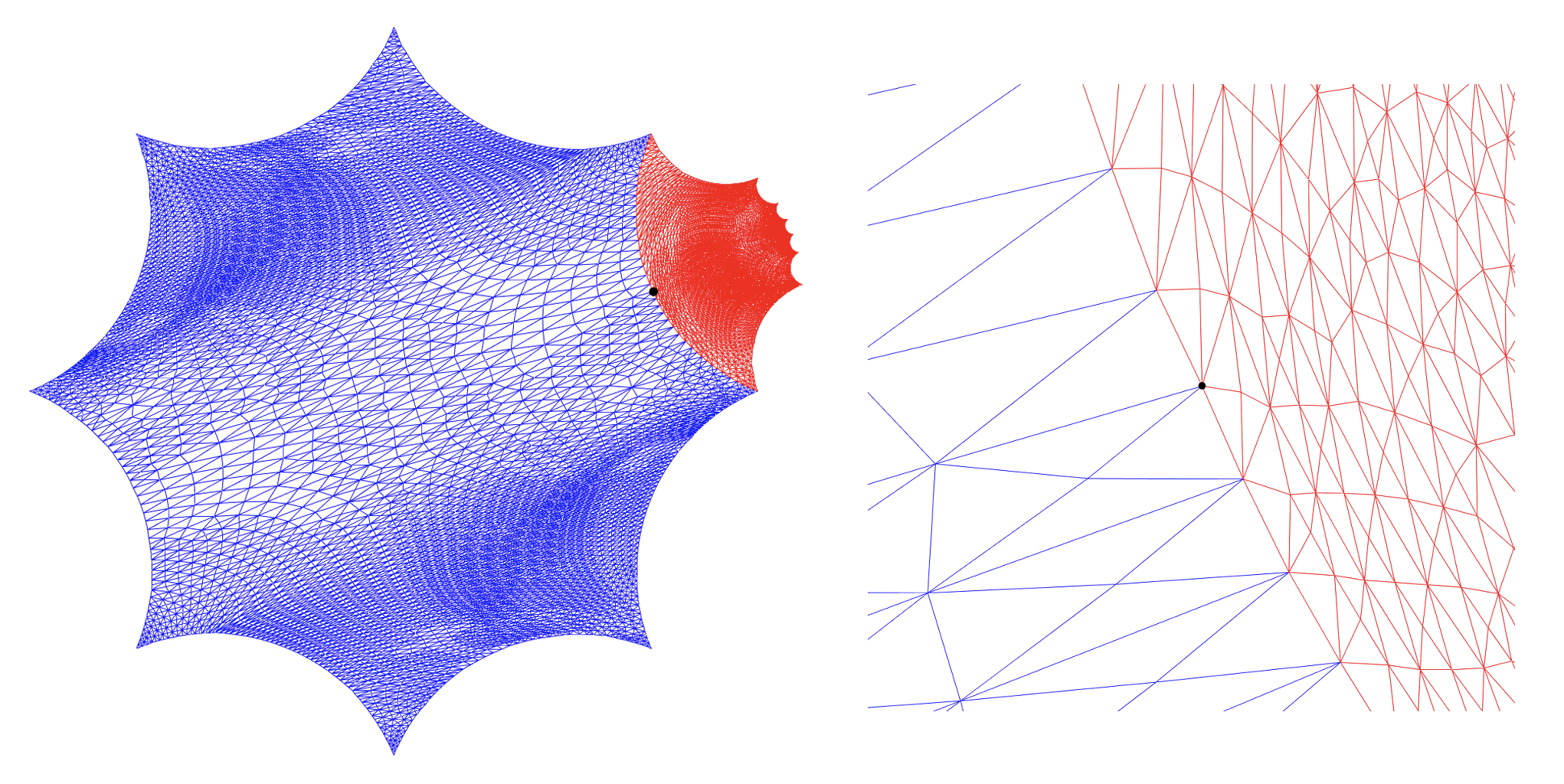}
\caption{The figures show how we compute the descent direction for boundary vertices. A chosen boundary vertex is pinned in black. We count all edges in the universal cover.}
\label{fig:descent_boundary}
\end{figure} 

\begin{figure}[t]
\centering
  \includegraphics[width=0.99\linewidth]{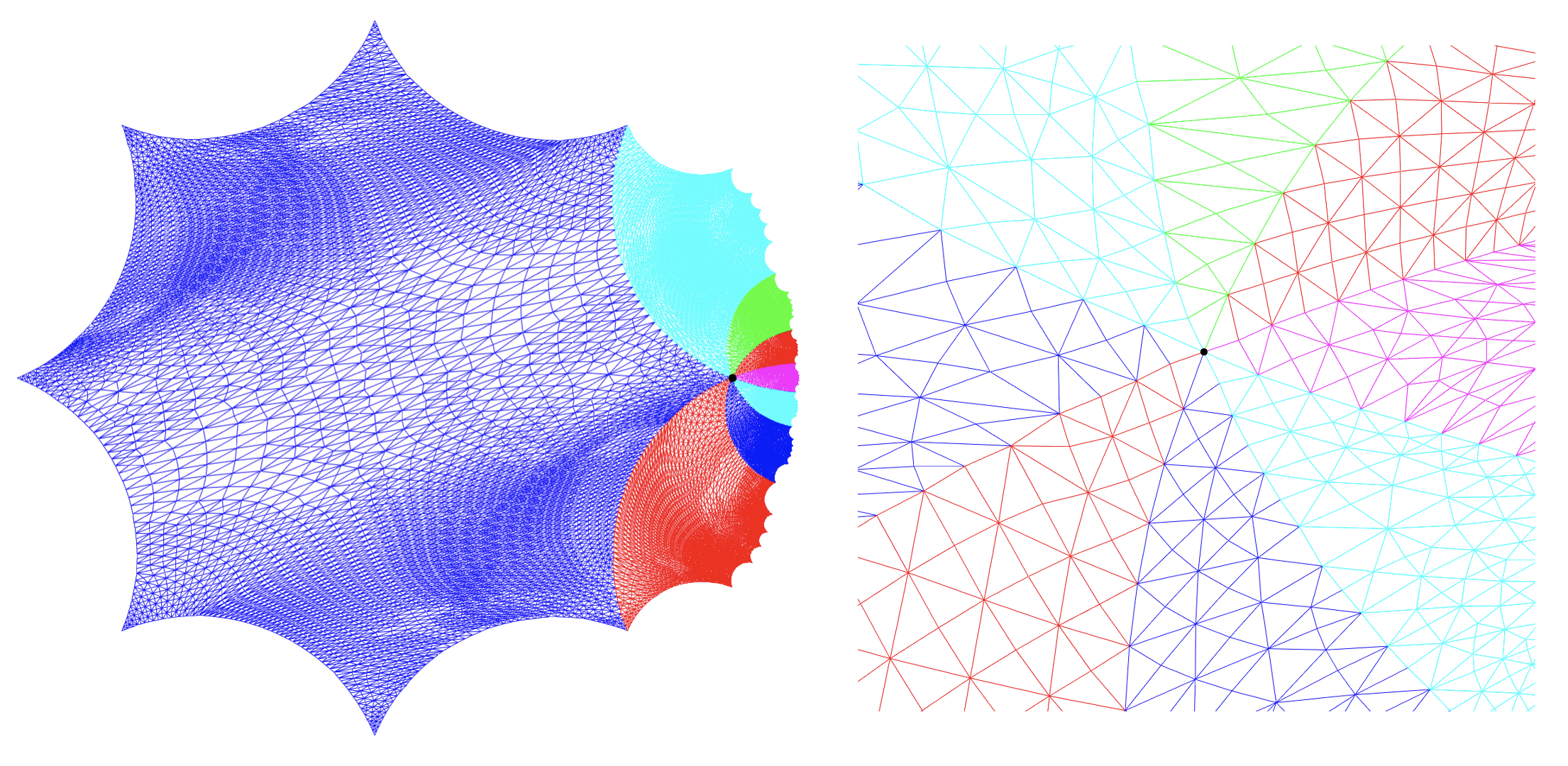}
\caption{The figures show how we compute the descent direction for corner vertices. A chosen corner vertex is pinned in black. We count all edges in the universal cover.}
\label{fig:descent_corner}
\end{figure} 

With the formula for the gradient of $D_c{\tilde{f}}$, it is time for us to introduce the minimization algorithm. The basic idea is to find a discrete-time approximation of the continuous-time gradient flow
\begin{equation}
    \frac{d}{dt}\tilde{f_t}(i) = -\dfrac{\partial D_c}{\partial\tilde{f}(i)}(\tilde{f}_t)
\end{equation}
To achieve it, we utilize the exponential map to get a piecewise solution. We fix the time step as a small enough positive number $\tau$. For each positive integer $k$, we define
\begin{equation}
    \tilde{f}_{(k+1)\tau}(i) = \exp_{\tilde{f}_{k\tau}(i)}\bigg(-\tau \dfrac{\partial D_c}{\partial\tilde{f}(i)}(\tilde{f}_{k\tau})\bigg)
    \label{eq:exponential_map}
\end{equation}
For each interior vertex $i\in\tilde{V}$, the exponential map gives the position of the vertex $i$ at time $(k+1)\tau$. The update of the boundary vertices of $\tilde{V}$ requires more attention due to the constraint~\eqref{eq:hard_constraint}. For each vertex of $V$ that has more than $1$ representative in $\tilde{V}$, we need to choose only one representative $i\in\tilde{V}$ and get its updated position in time $(k+1)\tau$ using formula~\eqref{eq:exponential_map}. For any other representative $j$ such that $\pi_V(i)=\pi_V(j)$ and $j = \gamma(i)$ for some $\gamma\in \Gamma$, we update the position of $j$ using formula
\begin{equation}
    \tilde{f}_{(k+1)\tau}(j) = \gamma(\tilde{f}_{(k+1)\tau}(i)).
    \label{eq:boundary_update}
\end{equation}
In this way, we ensure that $\tilde{f}_{t}$ satisfies the hard constraint~\eqref{eq:hard_constraint} for any time $t>0$. 
Finally, we need to give some thresholds to determine when we stop the iterations. One solution is to stop the iteration when the energy no longer decreases, i.e. the iteration stops at the $n$-th step if
\begin{equation}
    \max_{i}d(\tilde{f}_{(n+1)\tau}(i), \tilde{f}_{n\tau}(i)) < \epsilon
\end{equation}
for some prescribed $\epsilon>0$, where $d$ is the geodesic distance in $(\D,g)$. We thus take $\tilde{f}_{n\tau}$ as the approximated hyperbolic harmonic map. Another solution is to stop the iteration when the gradient of $D_c$ on $\D^{|V|}$ is small enough, i.e., the iteration stops at the $n-$th step if
\begin{equation}
    \dfrac{1}{|V|}\sum_{i\in V} g \left( \dfrac{\partial D_c(\tilde{f})}{\partial\tilde{f}(i)},\dfrac{\partial D_c(\tilde{f})}{\partial\tilde{f}(i)} \right) < \epsilon
\end{equation}
for some prescribed $\epsilon>0$, where $g$ is the Riemannian metric on $\D$. The validity of this approach can be shown by a simple lemma
\begin{lemma}
    Let $M$ be a compact Riemannian manifold. Suppose $f$ is a smooth function on $M$ which has a unique critical point $x^*$. Then, for any open neighborhood $U$ of $x^*$, there exists an $\epsilon>0$ such that $||\nabla f(x)||\geq\epsilon$ for any $x\in M\backslash U$.
\end{lemma}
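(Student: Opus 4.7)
The plan is to run a standard compactness argument: the complement of an open set in a compact manifold is compact, and a continuous positive function on a compact set attains a positive minimum. This lemma does not involve any of the hyperbolic geometry machinery developed earlier; it is a purely topological/analytic statement about smooth functions on compact manifolds.

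First I would observe that since $U$ is open in $M$, the set $K := M \setminus U$ is closed, and closed subsets of a compact space are compact. Next, since $f$ is smooth and $M$ carries a smooth Riemannian metric, the function $x \mapsto \|\nabla f(x)\|$ is continuous on $M$. By hypothesis, $x^*$ is the \emph{unique} critical point of $f$, so $\|\nabla f(x)\| > 0$ for every $x \in M \setminus \{x^*\}$. Because $x^* \in U$ (since $U$ is a neighborhood of $x^*$), we have $x^* \notin K$, so the continuous function $\|\nabla f\|$ is strictly positive on $K$.

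Applying the extreme value theorem on the compact set $K$, the continuous function $x \mapsto \|\nabla f(x)\|$ attains its minimum at some point $x_0 \in K$. Setting $\epsilon := \|\nabla f(x_0)\| > 0$ then gives $\|\nabla f(x)\| \geq \epsilon$ for every $x \in K = M \setminus U$, which is exactly the claim. There is no serious obstacle here; the only subtle point worth mentioning explicitly is that uniqueness of the critical point is what ensures $\inf_K \|\nabla f\| > 0$ rather than merely $\geq 0$, since otherwise the minimizer $x_0$ on $K$ could itself be a critical point distinct from $x^*$.
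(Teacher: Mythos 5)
Your proof is correct and takes essentially the same compactness argument as the paper; the only cosmetic difference is that you apply the extreme value theorem directly to $\|\nabla f\|$ on the compact set $M\setminus U$, whereas the paper phrases it contrapositively via a sequence with $\|\nabla f(x_j)\|\to 0$ and extracts a convergent subsequence. Both hinge on exactly the same ingredients: compactness of $M\setminus U$, continuity of $\|\nabla f\|$, and uniqueness of the critical point.
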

\begin{proof}
    Suppose that this statement is not correct. Then, we can take a sequence $(x_j)$ in $M\backslash U$ such that $||\nabla D_c(x_j)||$ tends to zero. Since $M\backslash U$ is compact, a subsequence of $(x_j)$ will converge to a critical point, leading to contradiction.
\end{proof}
To apply this lemma, we recall from the proof of \ref{thm:colin_de_verdiere} that $D_c$ is proper. So, we could restrict ourselves to a sub-level set of $D_c$ which is compact. Thus, if the gradient of $D_c$ at a map $f$ is small enough, we can safely tell that $f$ is close enough to the desired harmonic map.

\subsection{Convergence Rate}
It is always an interesting question to quest whether an iterative algorithm will converge and how fast it can be. To get an estimate for our algorithm, we need some result from Riemannian optimization. The following result from~\cite{boumal2023introduction} establishes an asymptotic convergence rate for constant step-size first order algorithms.
\begin{theorem}[Boumal \cite{boumal2023introduction}]
    Let $M$ be a Riemannian manifold with a retraction $R$. Let $f:M\to\R$ be a smooth function. Assume $x^*\in M$ satisfies
    \[\nabla f(x^*)=0\ \ \ \text{and}\ \ \ \operatorname{Hess}f(x^*)>0\]
    Let $0<\lambda_{\text{min}}\leq\lambda_{\text{max}}$ be the smallest and largest eigenvalues of $\operatorname{Hess}f(x^*)$, and let $\kappa=\frac{\lambda_{\text{max}}}{\lambda_{\text{min}}}$ denote the condition number of $\operatorname{Hess}f(x^*)$. Set $L>\frac{1}{2}\lambda_{\text{max}}$. Given $x_0\in M$, constant step-size Riemannian gradient descent iterates
    \[x_{k+1}=F(x_k),\ \ \ \text{with}\ \ \ F(x)=R_x\left(-\frac{1}{L}\nabla f(x)\right).\]
    These exists a neighborhood of $x^*$ such that, if the above sequence enters the neighborhood, then it stays in that neighborhood and it converges to $x^*$ at least linearly. If $L=\lambda_{\text{max}}$, the linear convergence factor is at most $1-1/\kappa$.
    \label{thm:numerical_convergence}
\end{theorem}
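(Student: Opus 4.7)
The plan is to treat the iteration $x_{k+1}=F(x_k)$ as a nonlinear dynamical system with fixed point $x^*$ and to deduce local linear convergence by linearization. The argument proceeds in three stages: identify the fixed point, compute the differential of $F$ at it, and then invoke a local contraction principle to upgrade the spectral estimate to genuine nonlinear convergence. That $x^*$ is a fixed point is immediate: $\nabla f(x^*)=0$ and every retraction satisfies $R_{x^*}(0)=x^*$, so $F(x^*)=x^*$.

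Next I would compute the differential $dF(x^*)\colon T_{x^*}M\to T_{x^*}M$ using the two defining properties of a retraction, namely $R_x(0)=x$ and $(dR_x)_0=\mathrm{Id}_{T_xM}$. Writing $F(x)=R_x\!\left(-\tfrac{1}{L}\nabla f(x)\right)$ and differentiating at $x^*$, where the argument of $R$ vanishes, the chain rule yields
\[
dF(x^*) \;=\; \mathrm{Id}_{T_{x^*}M} \;-\; \tfrac{1}{L}\,\operatorname{Hess} f(x^*).
\]
Its eigenvalues are exactly $1-\lambda/L$ as $\lambda$ ranges over the eigenvalues of $\operatorname{Hess} f(x^*)$. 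The hypothesis $L>\tfrac{1}{2}\lambda_{\text{max}}$ forces $\lambda/L\in(0,2)$ for every such $\lambda$, so all eigenvalues of $dF(x^*)$ lie strictly inside $(-1,1)$ and its spectral radius $\rho$ is strictly less than $1$. When $L=\lambda_{\text{max}}$ the worst case is $1-\lambda_{\text{min}}/\lambda_{\text{max}}=1-1/\kappa$.

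Finally I would transfer this spectral bound to the nonlinear iteration. Fix $\eta>0$ with $\rho+\eta<1$; standard linear algebra provides a norm $\|\cdot\|_*$ on $T_{x^*}M$ in which $\|dF(x^*)\|_*\le\rho+\eta$. Working in normal coordinates at $x^*$, smoothness of $F$ guarantees that $\|F(x)-x^*\|_*\le(\rho+2\eta)\|x-x^*\|_*$ on a sufficiently small neighborhood $U$ of $x^*$. Then $F(U)\subseteq U$, and iterating gives $\|x_k-x^*\|_*\le(\rho+2\eta)^k\|x_0-x^*\|_*$ for any $x_0\in U$. The main obstacle is precisely this last stage: a retraction is only required to match the exponential map to first order, so the second-order Taylor expansion of $F$ couples the curvature of $R$ with the non-quadratic part of $f$, and one must control both simultaneously while also selecting a norm in which $dF(x^*)$ is an honest contraction. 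This is standard material and is carried out cleanly in the cited reference, but it is the point where all of the geometric data---retraction, Hessian, and choice of chart---genuinely interact.
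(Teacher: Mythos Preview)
The paper does not prove this theorem at all: it is quoted verbatim from Boumal's book and used as a black box, with the surrounding text only explaining why the hypotheses are met in the authors' setting. Your sketch is essentially the argument Boumal gives---linearize the iteration map at the fixed point, read off $dF(x^*)=\mathrm{Id}-\tfrac{1}{L}\operatorname{Hess} f(x^*)$ from the first-order defining properties of a retraction, bound the spectral radius using $L>\tfrac{1}{2}\lambda_{\max}$, and then pass from the spectral contraction to a genuine local contraction via an adapted norm and Taylor expansion in a chart. Nothing is missing or wrong; the only remark is that since the paper treats this as a citation rather than a result to be proved, there is no ``paper's own proof'' to compare against.
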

This theorem works for an arbitrary retraction on a Riemannian manifold, while the exponential map used by us is a special case of it. To see why this theorem applies to our algorithm, recall from the proof of~\ref{thm:colin_de_verdiere} that each geodesic realization $f:(V,E)\to S_2$ lifts to the universal cover and can be regarded as an element in the product space $\D^{|V|}$. In contrast, a geodesic realization can be recovered from an element in $\D^{|V|}$ through homotopy data. As long as the constraint~\eqref{eq:hard_constraint} is strictly satisfied, the initial map $\tilde{f}:(\tilde{V},\tilde{E})\to\D$ induces a well-defined geodesic realization $f:(V,E)\to\D/\Gamma$. With the homotopy class prescribed by the initialization, it suffices to implement the optimization algorithm simply in $\D^{|V|}$. The special update~\eqref{eq:boundary_update} for boundary vertices ensures that the map $\tilde{f}_t$ induces a well-defined geodesic realization $f_t:(V,E)\to\D/\Gamma$ for each iteration step. Since $\pi:\D\to\D/\Gamma$ is a covering map of Riemann surfaces, the formula for the gradient, exponential map, and Hessian can be safely used in the universal cover. Recall again from~\ref{thm:colin_de_verdiere} that the Hessian of $D_c$ at the discrete harmonic map is positive definite. As a result, the above theorem works well for our gradient descent algorithm, guaranteeing an at least linear convergence rate when the time step is small enough and the map in hand is close enough to the discrete harmonic map.

\section{Experiments}
\subsection{A genus 2 example}
We first demonstrate the numerical results of the discrete harmonic map from a genus-2 triangulated surface $S_1$ to the hyperbolic surface $S_2$ constructed as in Fig.~\ref{fig:hyperbolic_octagon}. The initial closed triangulated surface is shown in Fig.~\ref{fig:g2_original}. As illustrated above, the first step of our algorithm is to compute a flat hyperbolic metric $l$ on $S_1$. We then cut $S_1$ along a homology basis to get a simply-connected triangulated surface $\tilde{S}_1$ and embed $\tilde{S}_1$ in $\H$ with respect to $l$, as shown in Fig.~\ref{fig:g2_embedding}. Fig.~\ref{fig:g2_initial} shows the Euclidean harmonic map from $\tilde{S}_1$ to the octagon, which is obtained by solving a Dirichlet problem. Initialized by this map, we run our gradient descent algorithm with a fixed step-size. Fig.~\ref{fig:g2_final} shows the output harmonic map $\tilde{f}:(\tilde{V},\tilde{E})\to\D$ with~\eqref{eq:hard_constraint} satisfied.

\begin{figure*}[t!]
\centering
\begin{subfigure}[t]{0.47\textwidth}
\centering
\includegraphics[width=.9\linewidth]{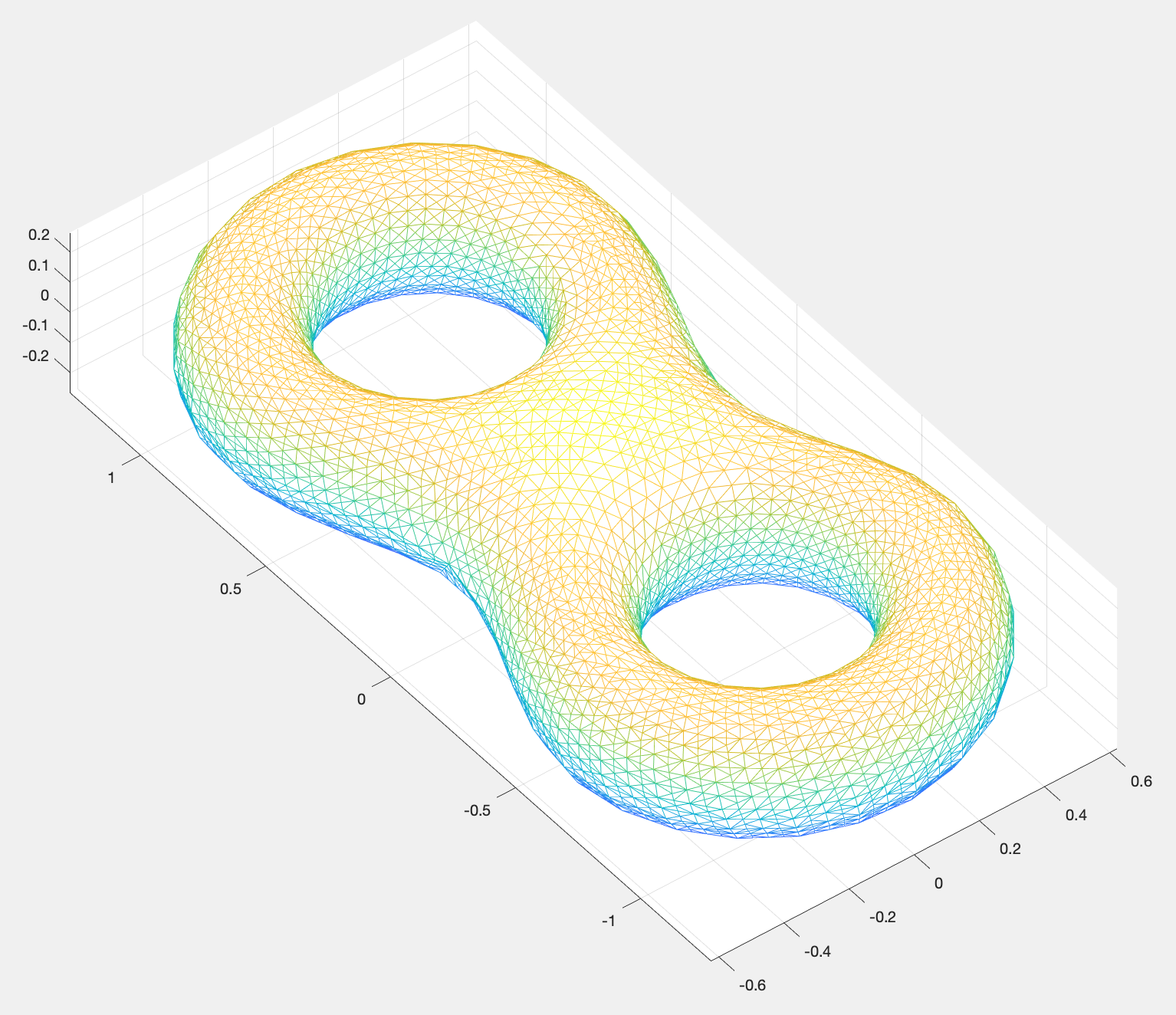}
\caption{The original genus-2 surface}
\label{fig:g2_original}
\end{subfigure}%
\begin{subfigure}[t]{0.5\textwidth}
\centering
\includegraphics[width=.9\linewidth]{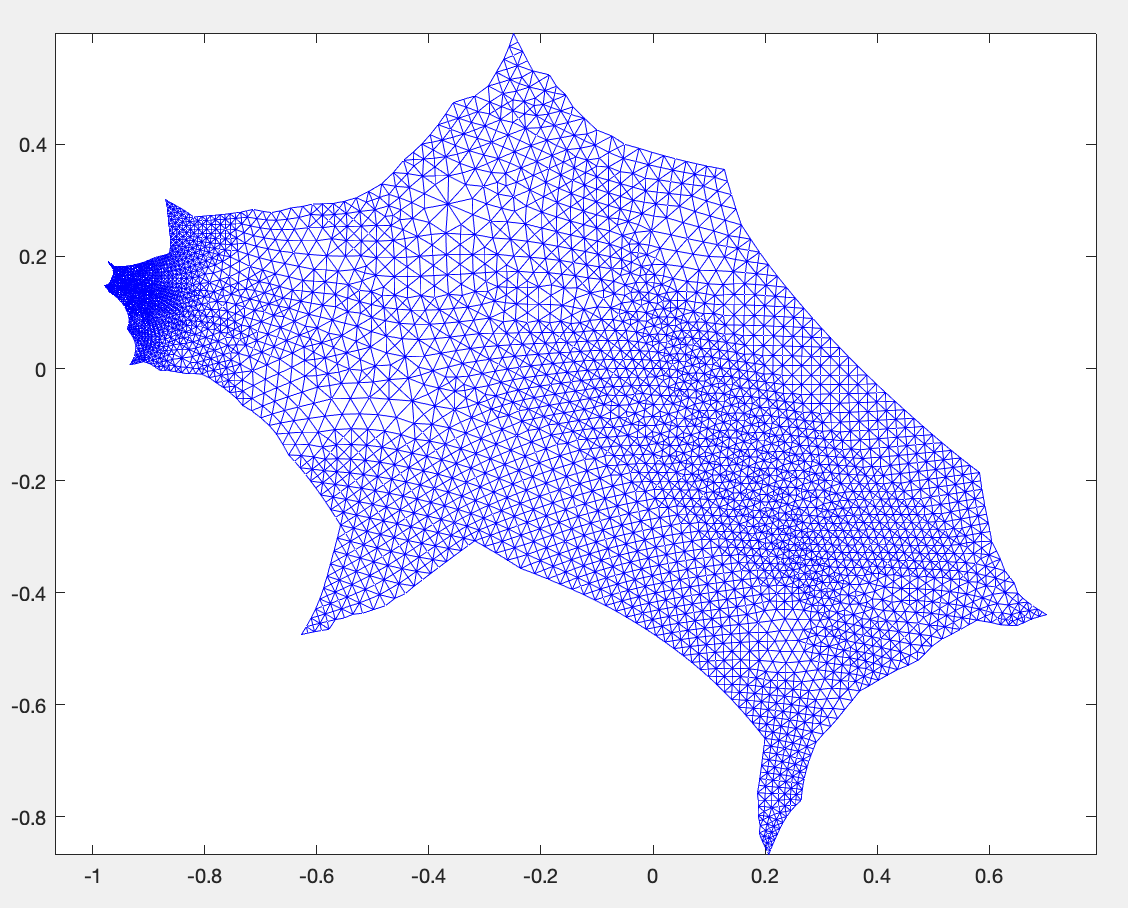}
\caption{An embedding given by the discrete Ricci flow}
\label{fig:g2_embedding}
\end{subfigure}
\begin{subfigure}[t]{.5\textwidth}
\centering
\includegraphics[width=.9\linewidth]{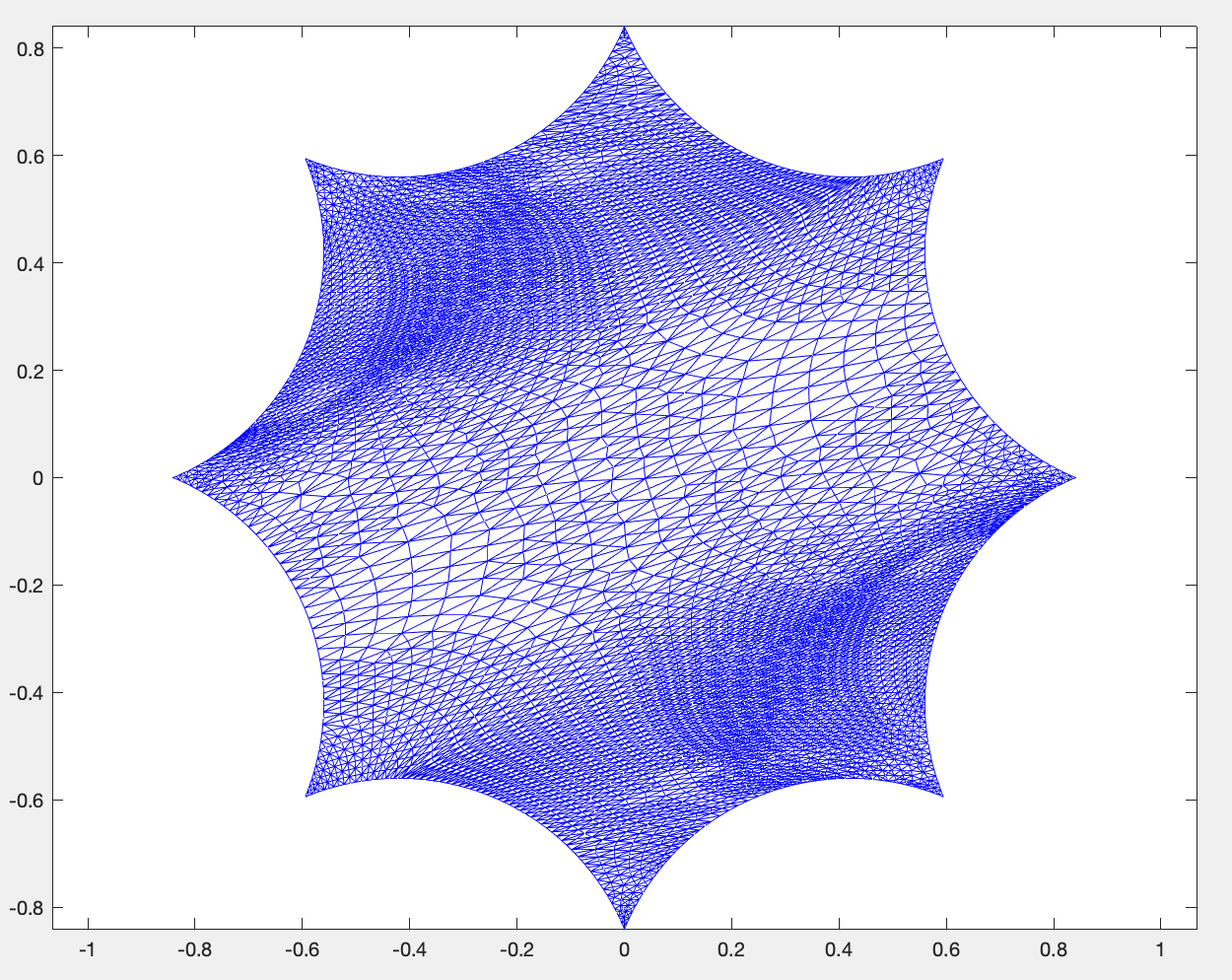}
\caption{The initial map}
\label{fig:g2_initial}
\end{subfigure}%
\begin{subfigure}[t]{.5\textwidth}
\centering
\includegraphics[width=.9\linewidth]{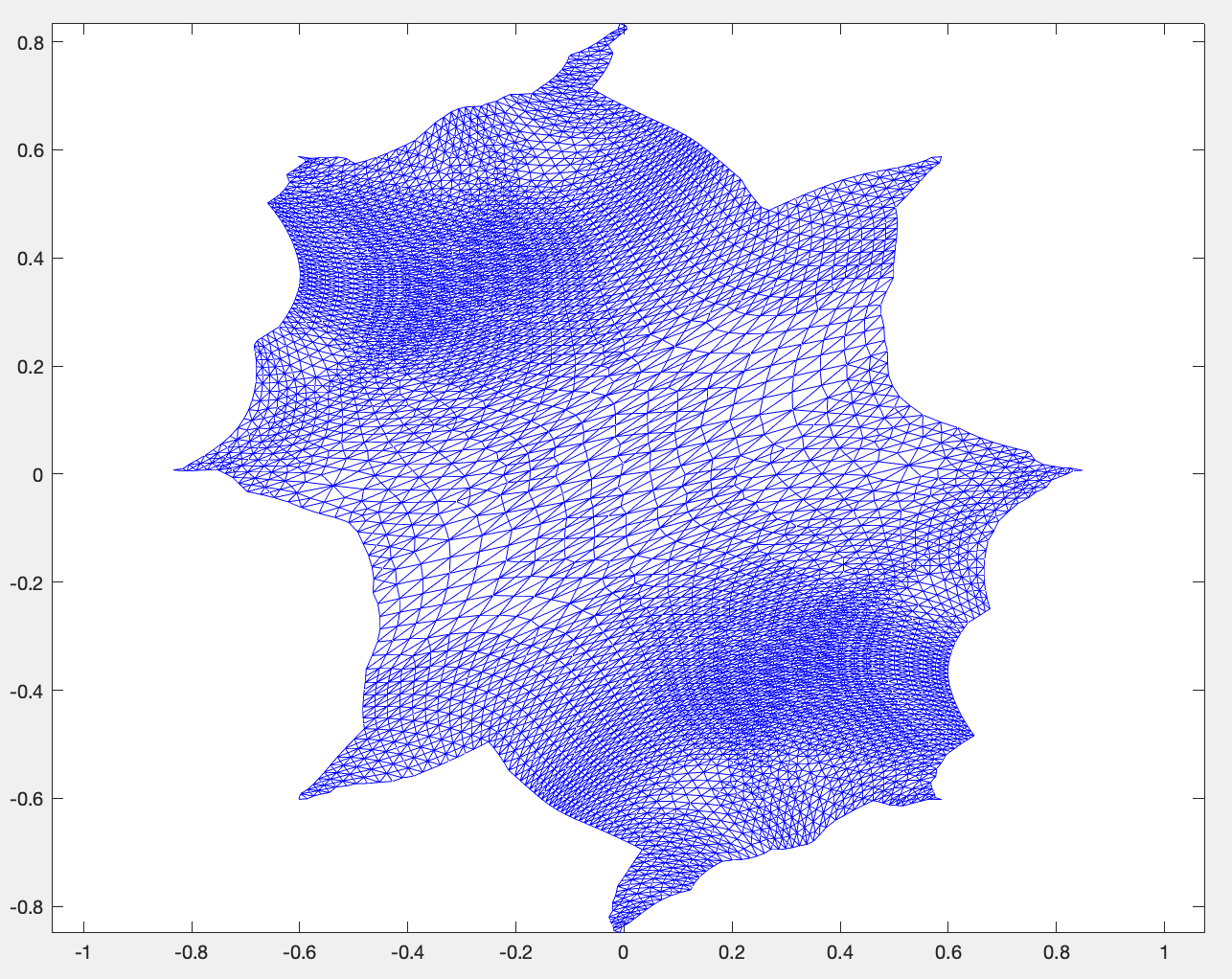}
\caption{The output harmonic map}
\label{fig:g2_final}
\end{subfigure}%
\caption{A genus-2 example}
\label{fig:g2_example_1}
\end{figure*}

Fig.~\ref{fig:g2_tessellation} shows the tessellation of it under the action of several side-pairings, which proves that the output map strictly satisfies the constraint~\eqref{eq:hard_constraint}. Fig.~\ref{fig:g2_energy} and Fig.~\ref{fig:g2_gradient} show the descent of the Dirichlet energy and the norm of the gradient as time goes on, which is discretized at the product of the step size and number of iterations. The last figure shows the convergence rate of the descent, whose value at iteration $k$ is defined as
\begin{equation}
    \big(D_c(\tilde{f}_{k\tau})-D_c(\tilde{f})\big)/\big(D_c(\tilde{f}_{(k-1)\tau})-D_c(\tilde{f})\big)
\end{equation}
where $\tau$ is the step size and $\tilde{f}$ is the final map. As can be seen, the convergence rate rapidly decays from $1$ as the iteration approaches the end. This numerical experiment shows that we could expect an at least linear asymptotic convergence rate for some appropriate step size, which is theoretically stated in \ref{thm:numerical_convergence}.

\begin{figure}
\begin{subfigure}[t]{.51\textwidth}
\centering
\includegraphics[width=.9\linewidth]{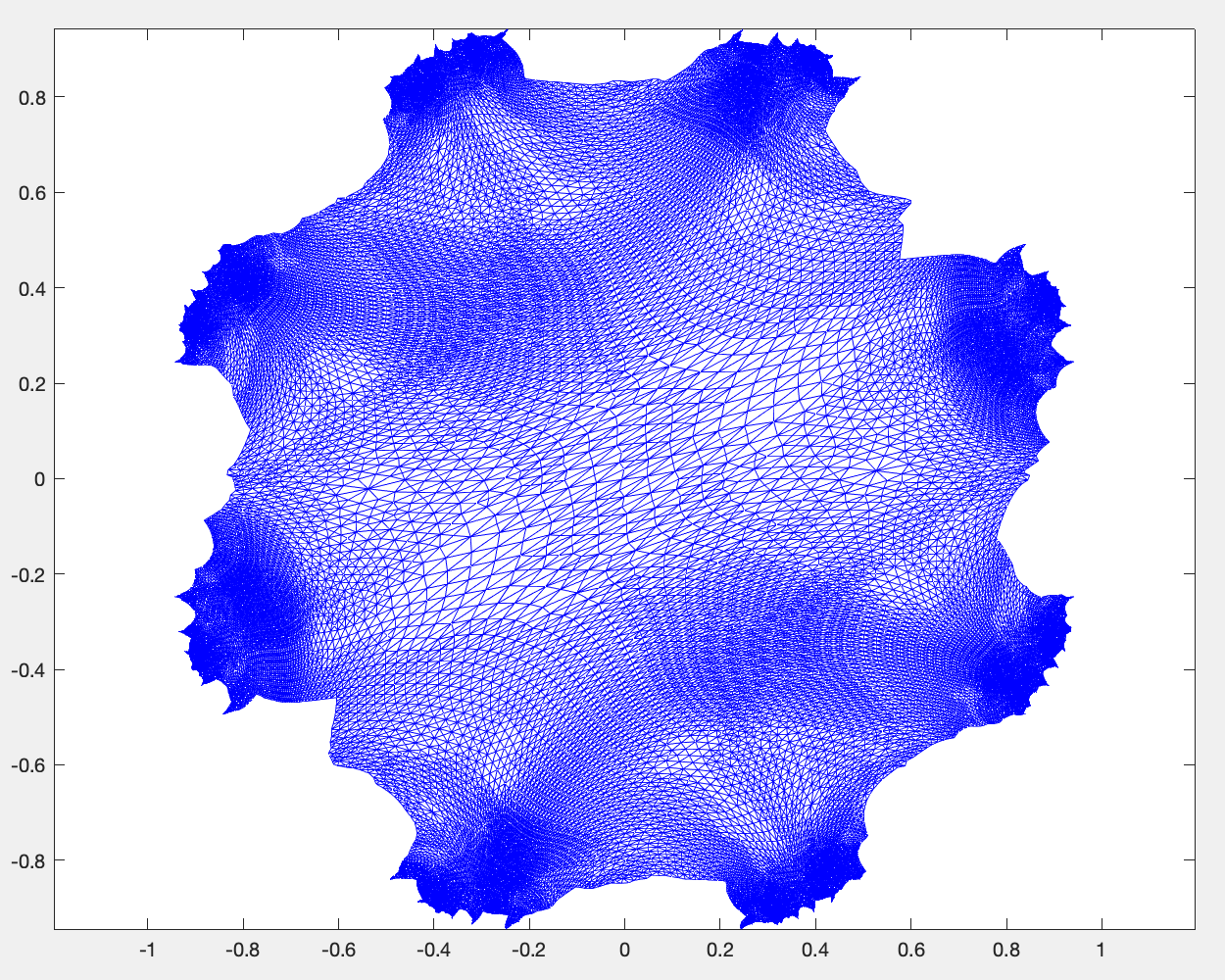}
\caption{Tessellation given by output map}
\label{fig:g2_tessellation}
\end{subfigure}%
\begin{subfigure}[t]{.49\textwidth}
\centering
\includegraphics[width=.9\linewidth]{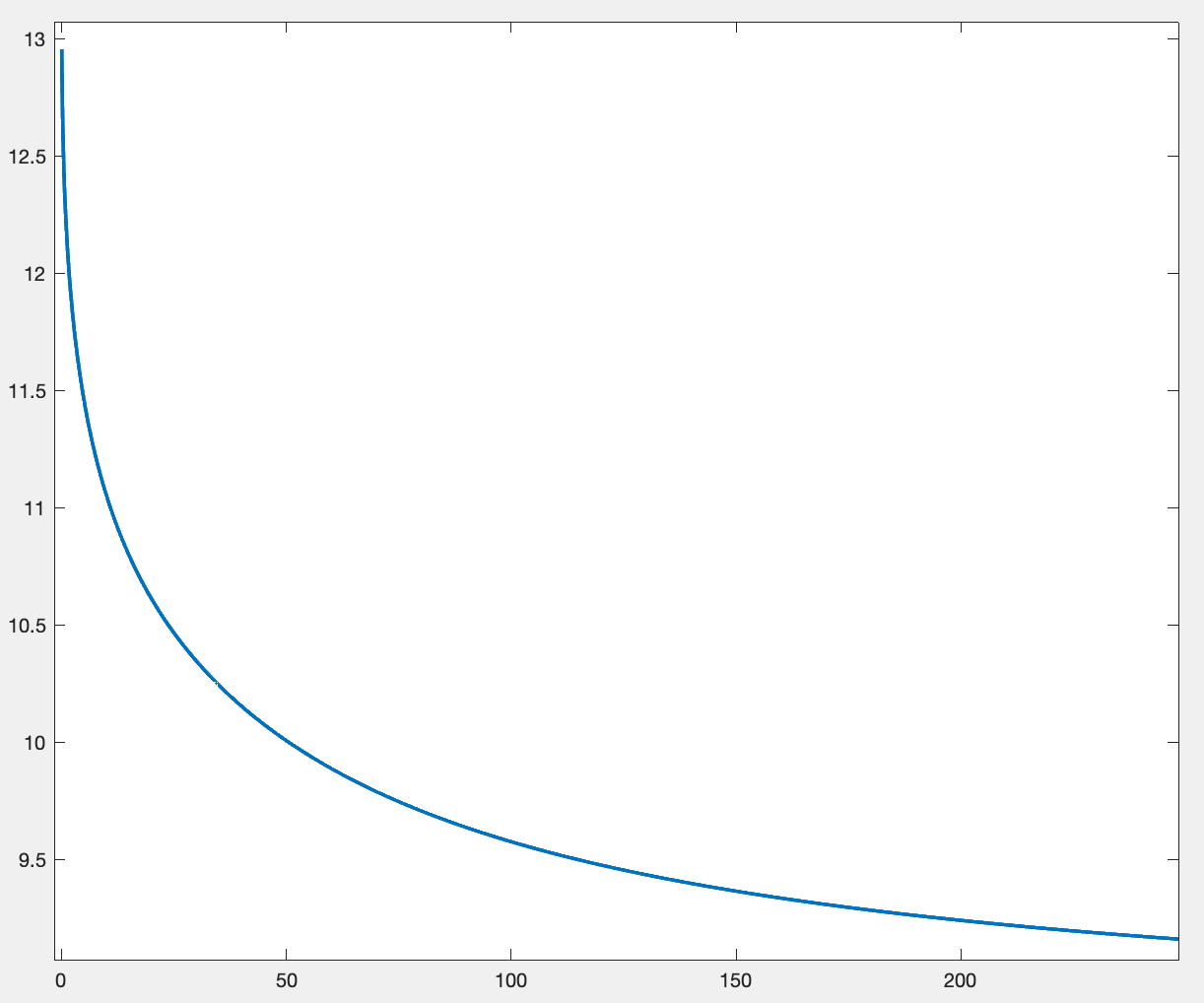}
\caption{The descent of energy}
\label{fig:g2_energy}
\end{subfigure}%
\\
\begin{subfigure}[t]{.5\textwidth}
\centering
\includegraphics[width=.9\linewidth]{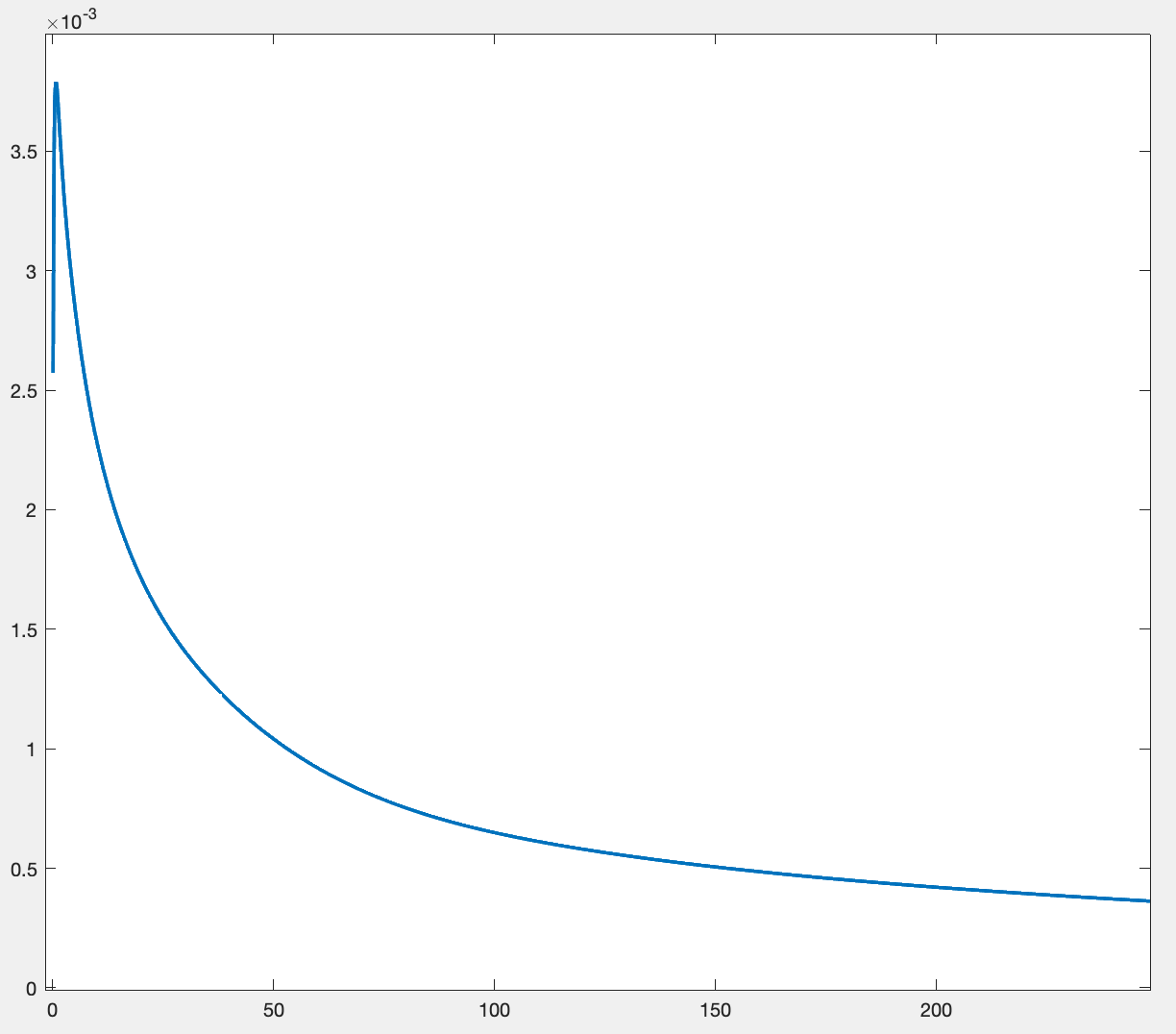}
\caption{The descent of gradient}
\label{fig:g2_gradient}
\end{subfigure}%
\begin{subfigure}[t]{.5\textwidth}
\centering
\includegraphics[width=.9\linewidth]{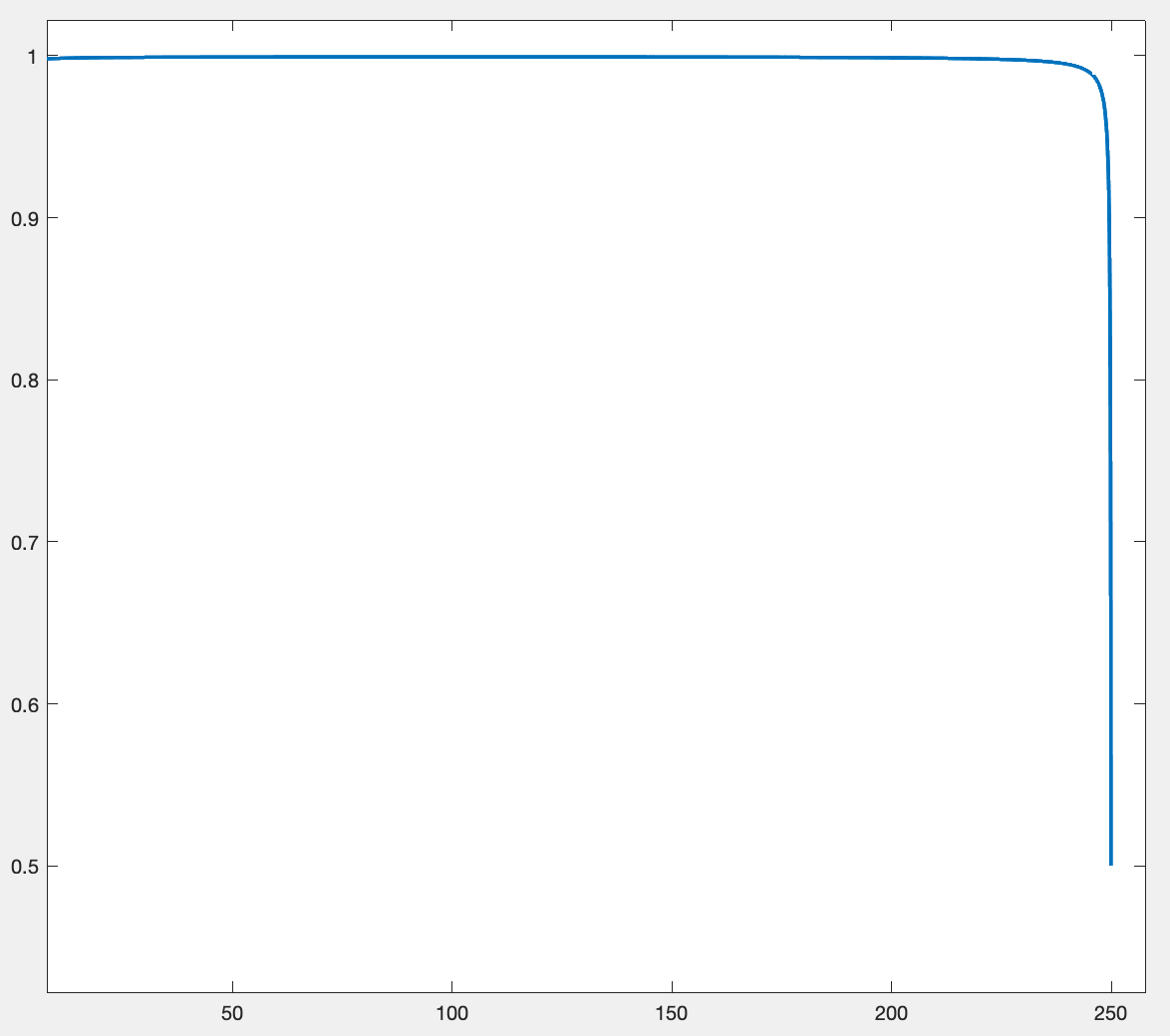}
\caption{Convergence rate}
\label{fig:g2_convergence_rate}
\end{subfigure}%
\caption{A genus-2 example}
\label{fig:g2_example_2}
\end{figure}

\subsection{A genus-3 example}
Next, we demonstrate the numerical results of the discrete harmonic map from a genus-3 surface to a hyperbolic surface whose fundamental domain is the standard 12-gon. Again, Fig.~\ref{fig:g3_original} shows the original surface. Fig.~\ref{fig:g3_polygon} shows the canonical polygon given by the uniformization metric. Fig.~\ref{fig:g3_initial_local} shows a local region of the initial Euclidean harmonic map~\ref{fig:g3_initial}, whereas Fig.~\ref{fig:g3_final_local} shows the same region of the output harmonic map~\ref{fig:g3_final}. As we can see, the initial Euclidean harmonic map is not bijective but the output hyperbolic map is. With a fixed step size that is small enough, Fig.~\ref{fig:g3_example_3} shows the descent of the energy and the norm of the gradient, and the convergence rate. Similar to the genus-2 case, the convergence is good when we take this step size. For comparison, Fig.~\ref{fig:g3_example_4} shows the descent of the energy and the norm of the gradient with respect to a step size that is $20$ times the previous one. As we can see, the energy oscillates strongly over the iterations, while the norm of the gradient oscillates a little. However, the two quantities still decrease as time passes. 

\begin{figure}
\begin{subfigure}[t]{.49\textwidth}
\centering
\includegraphics[width=.9\linewidth]{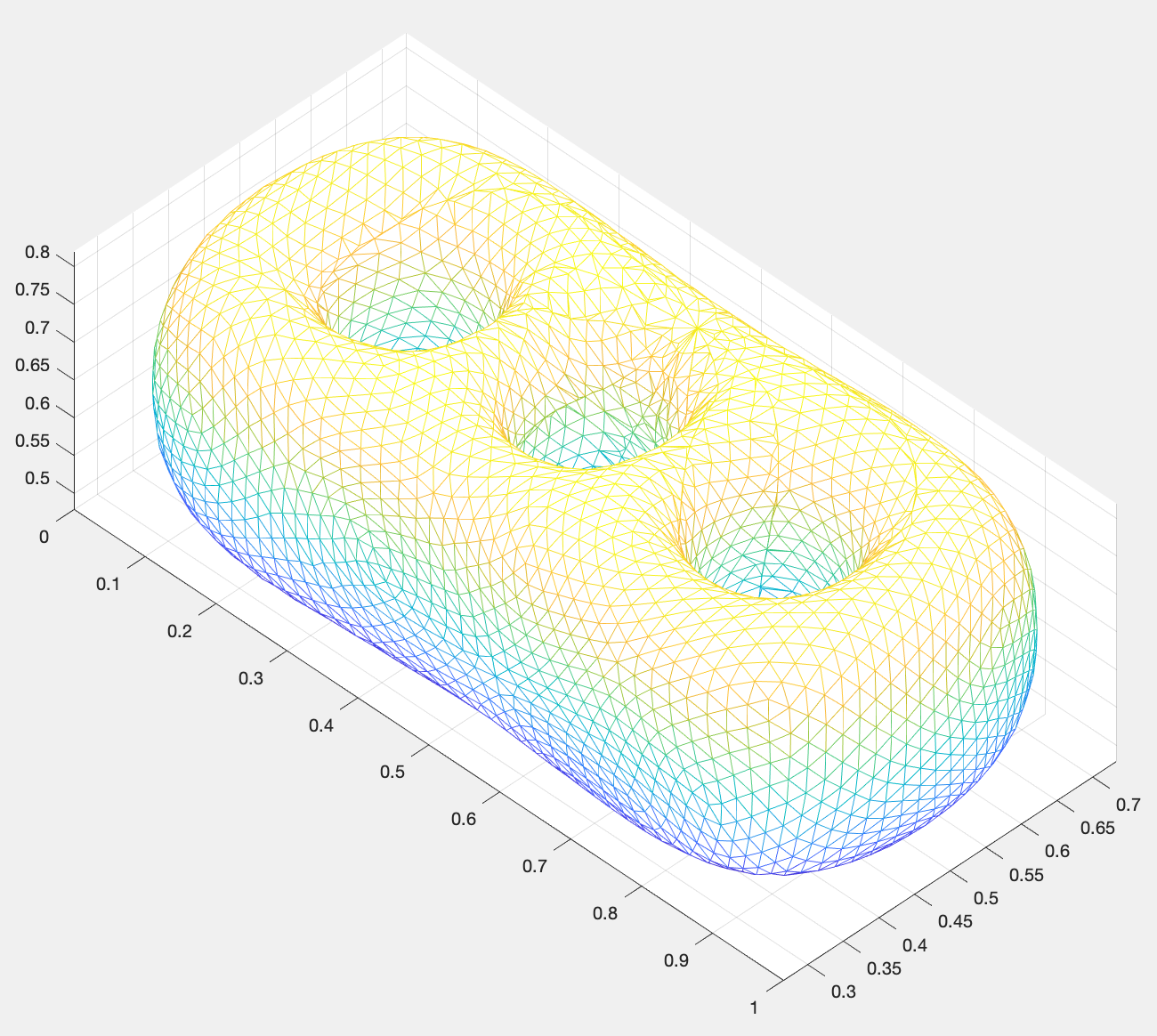}
\caption{The original genus-3 surface}
\label{fig:g3_original}
\end{subfigure}%
\begin{subfigure}[t]{.51\textwidth}
\centering
\includegraphics[width=.9\linewidth]{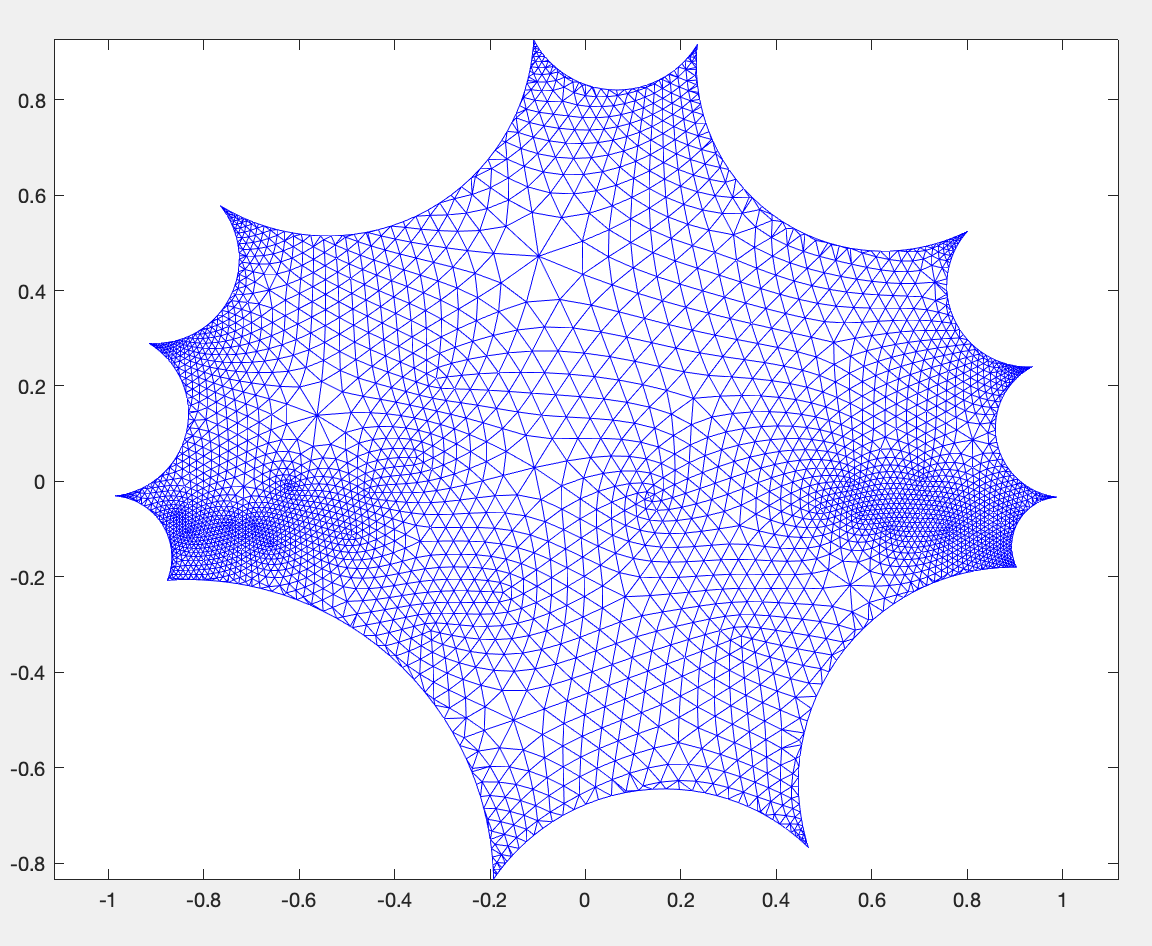}
\caption{The canonical polygon}
\label{fig:g3_polygon}
\end{subfigure}%
\caption{A genus-3 example}
\label{fig:g3_example_1}
\end{figure}

\begin{figure}
\begin{subfigure}[t]{.5\textwidth}
\centering
\includegraphics[width=.9\linewidth]{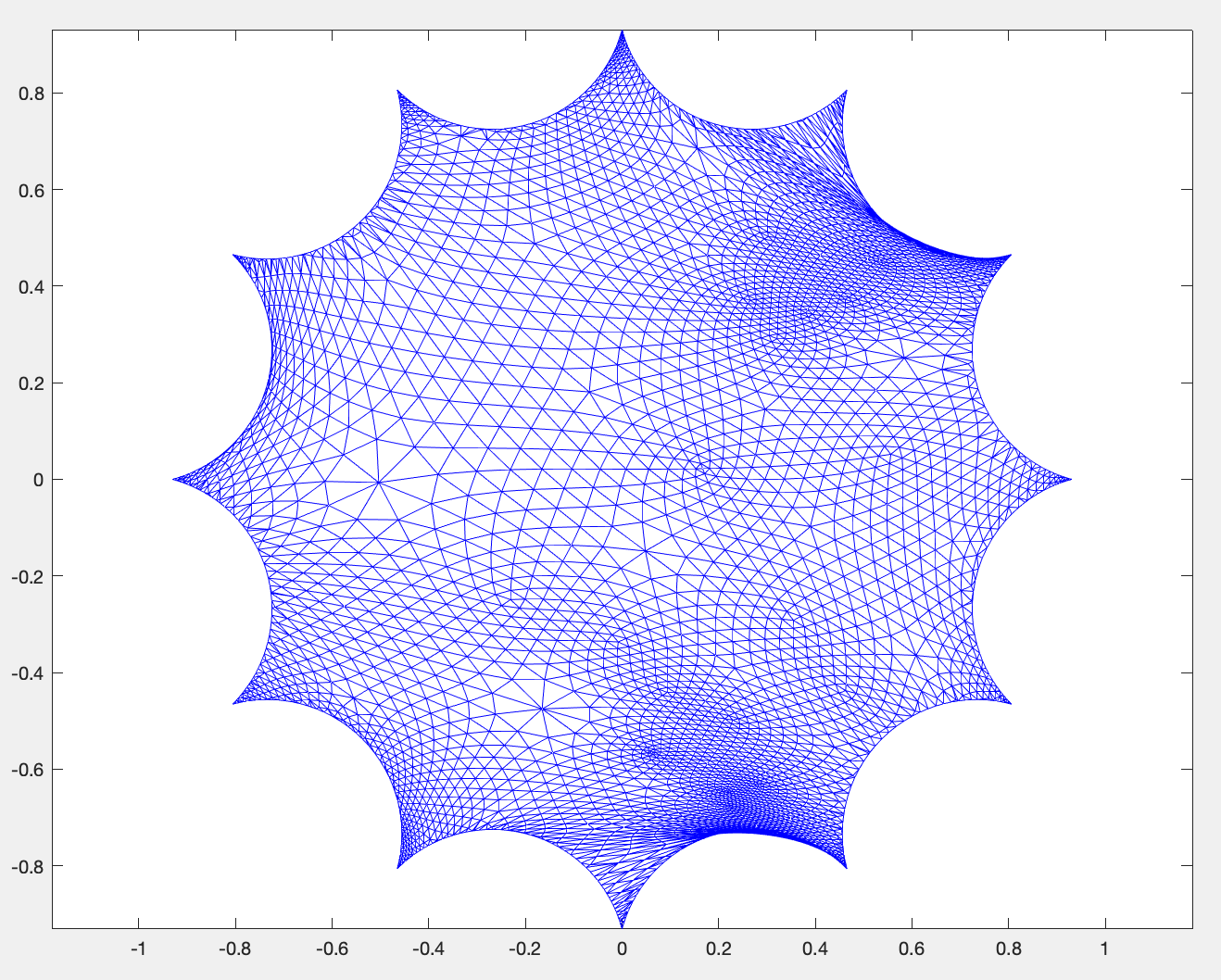}
\caption{The initial map}
\label{fig:g3_initial}
\end{subfigure}%
\begin{subfigure}[t]{.5\textwidth}
\centering
\includegraphics[width=.9\linewidth]{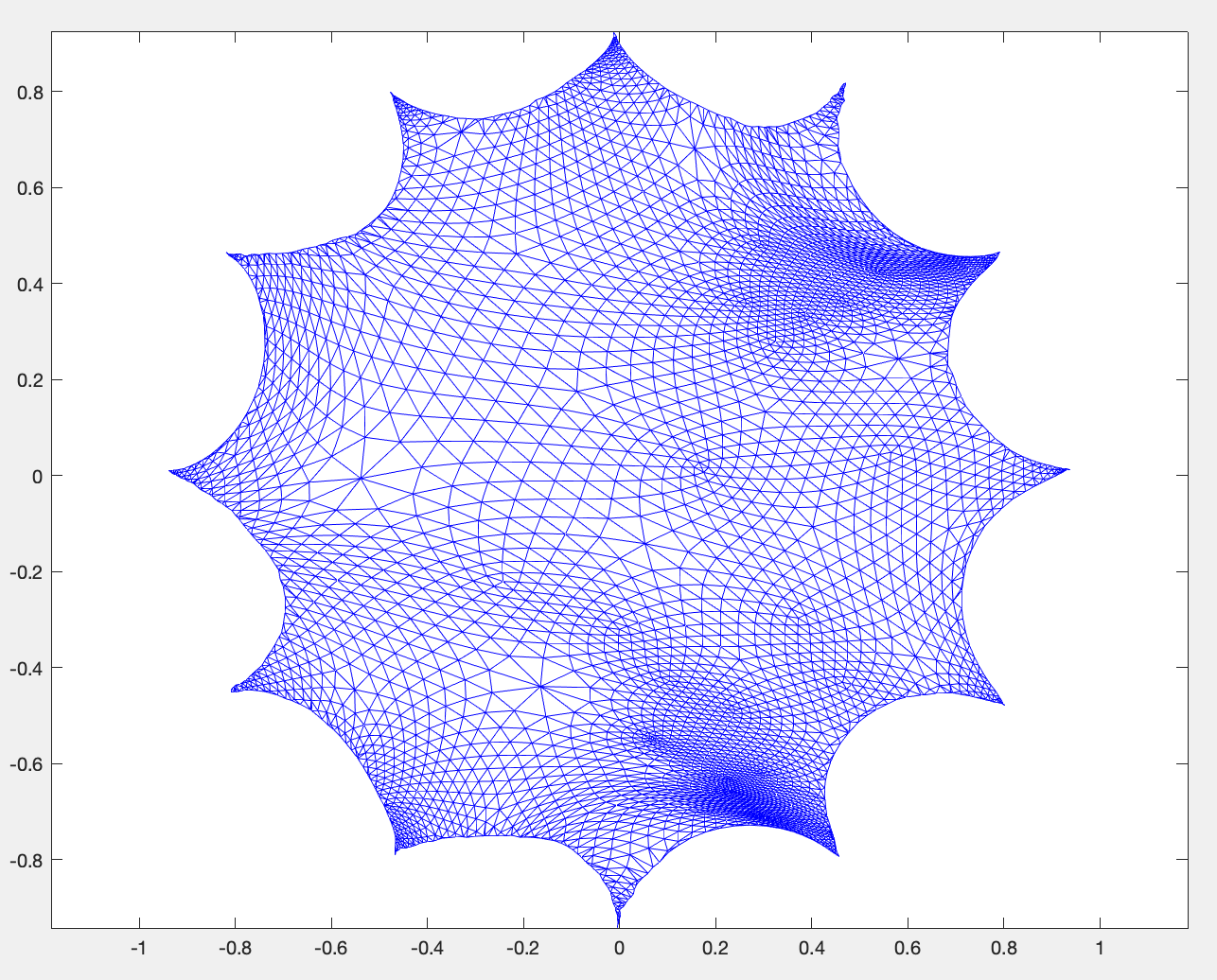}
\caption{The output harmonic map}
\label{fig:g3_final}
\end{subfigure}%
\\
\begin{subfigure}[t]{.5\textwidth}
\centering
\includegraphics[width=.8\linewidth]{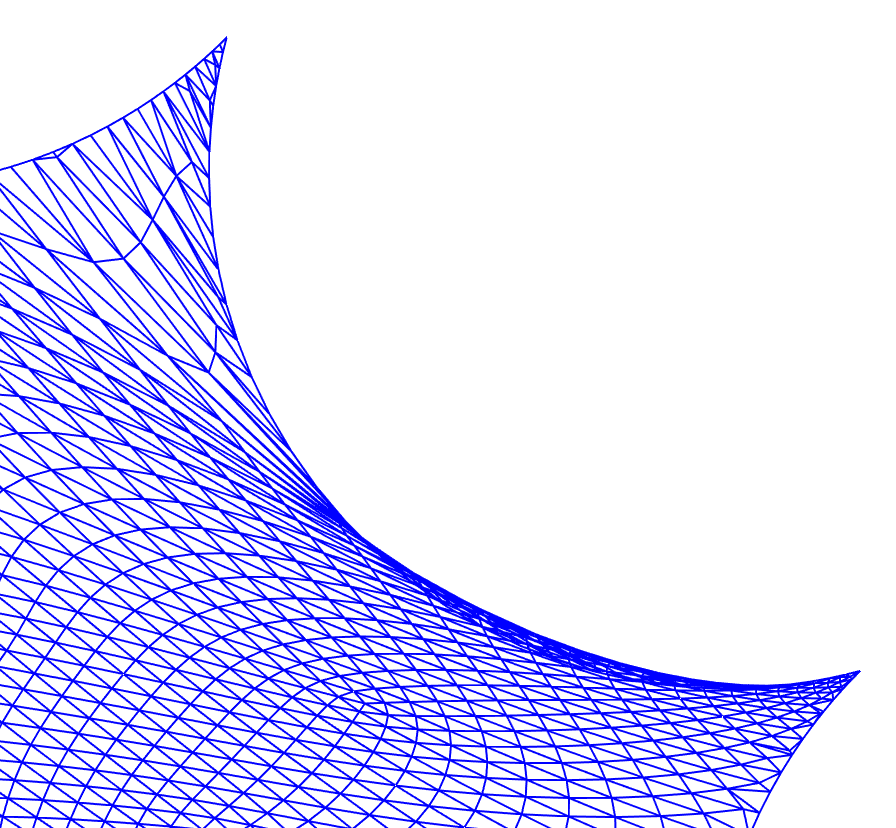}
\caption{A local region of initial map}
\label{fig:g3_initial_local}
\end{subfigure}%
\begin{subfigure}[t]{.5\textwidth}
\centering
\includegraphics[width=.8\linewidth]{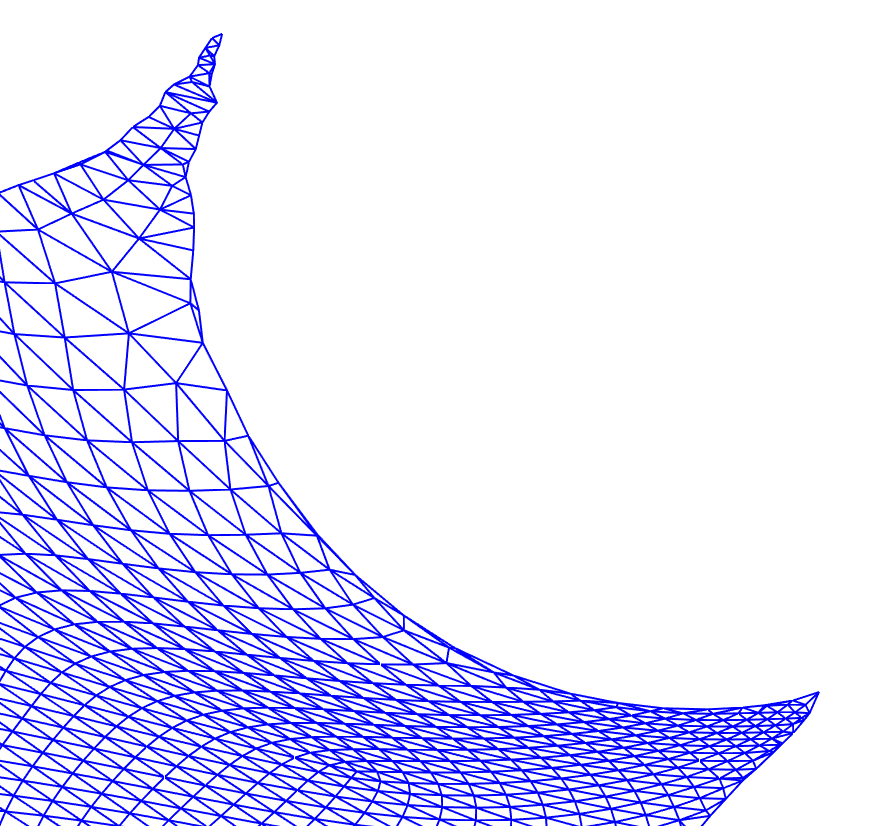}
\caption{A local region of output map}
\label{fig:g3_final_local}
\end{subfigure}%
\caption{A genus-3 example}
\label{fig:g3_example_2}
\end{figure}

\begin{figure}
\begin{subfigure}[t]{.51\textwidth}
\centering
\includegraphics[width=.9\linewidth]{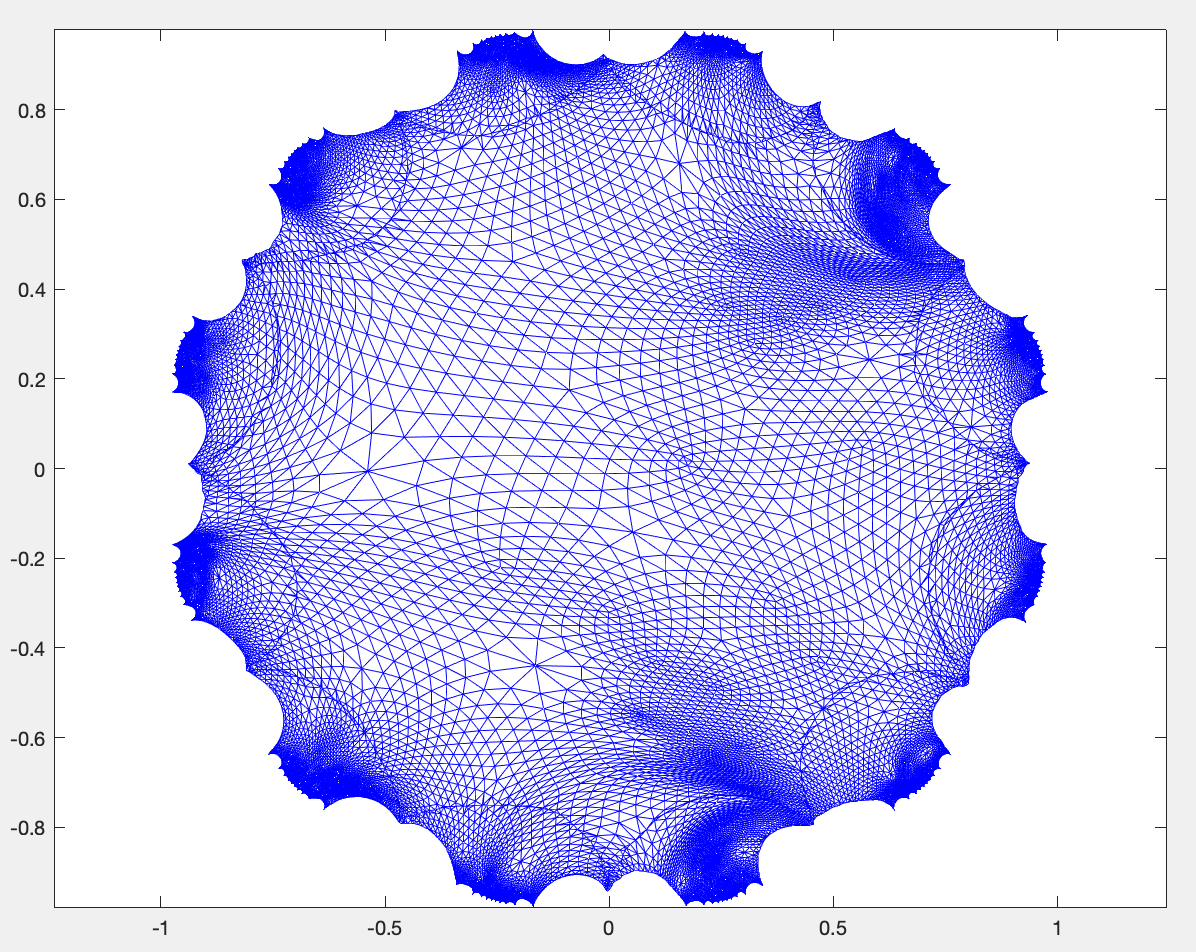}
\caption{Tessellation given by output map}
\label{fig:g3_tessellation}
\end{subfigure}%
\begin{subfigure}[t]{.49\textwidth}
\centering
\includegraphics[width=.9\linewidth]{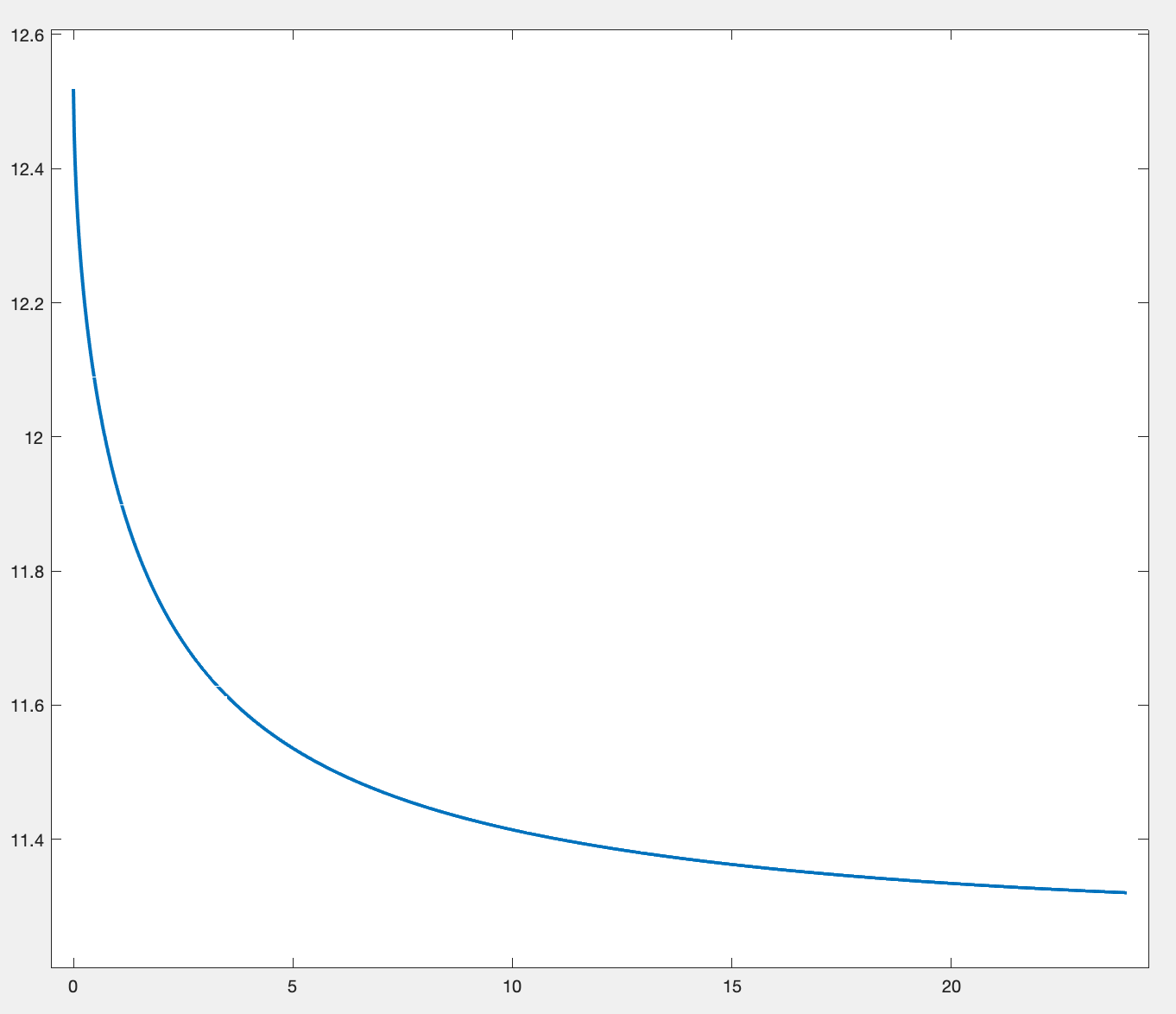}
\caption{The descent of energy}
\label{fig:g3_energy}
\end{subfigure}%
\\
\begin{subfigure}[t]{.5\textwidth}
\centering
\includegraphics[width=.9\linewidth]{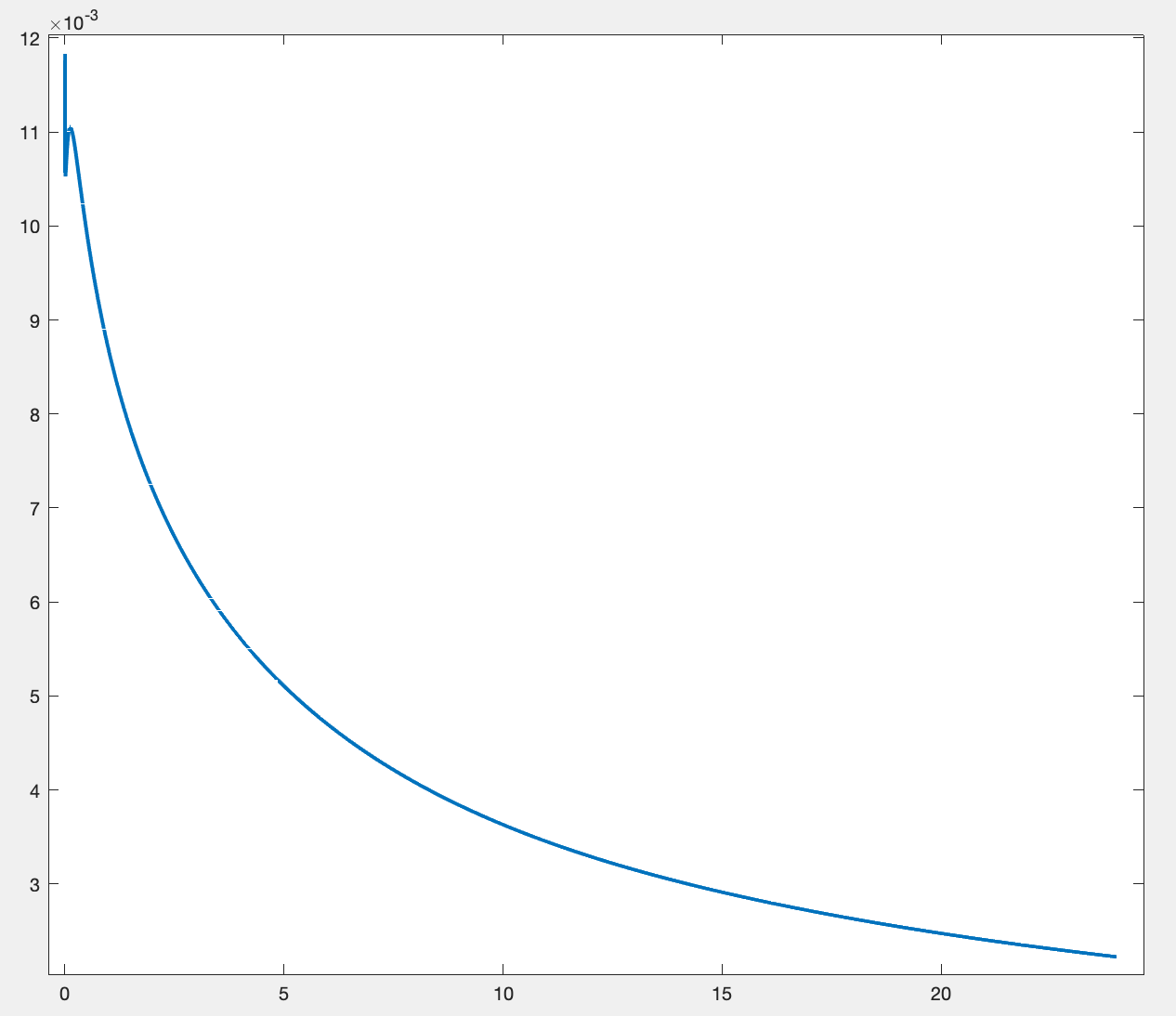}
\caption{The descent of gradient}
\label{fig:g3_gradient}
\end{subfigure}%
\begin{subfigure}[t]{.5\textwidth}
\centering
\includegraphics[width=.9\linewidth]{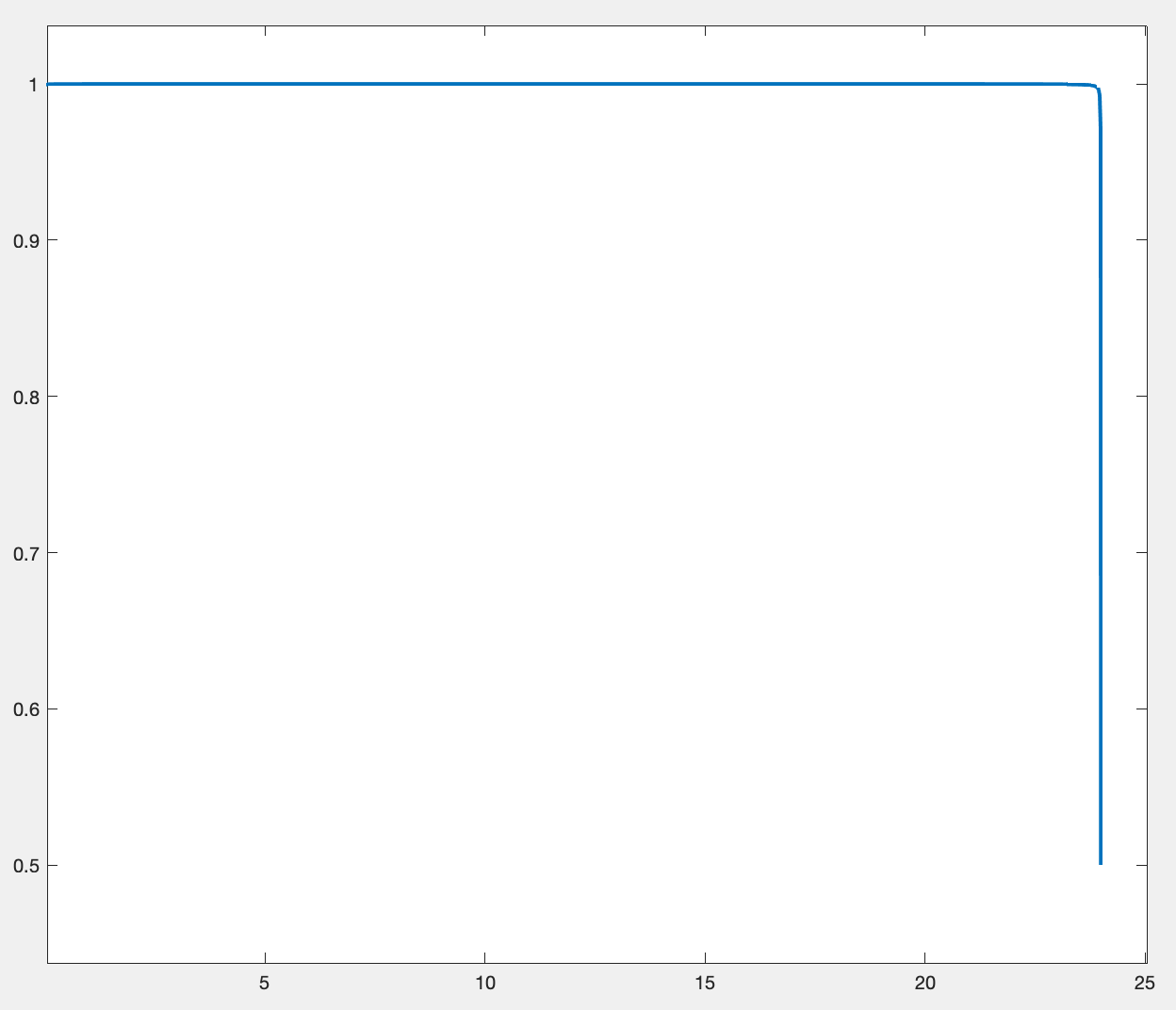}
\caption{Convergence rate}
\label{fig:g3_convergence_rate}
\end{subfigure}%
\caption{A genus-3 example}
\label{fig:g3_example_3}
\end{figure}

\begin{figure}
\begin{subfigure}[t]{.5\textwidth}
\centering
\includegraphics[width=.9\linewidth]{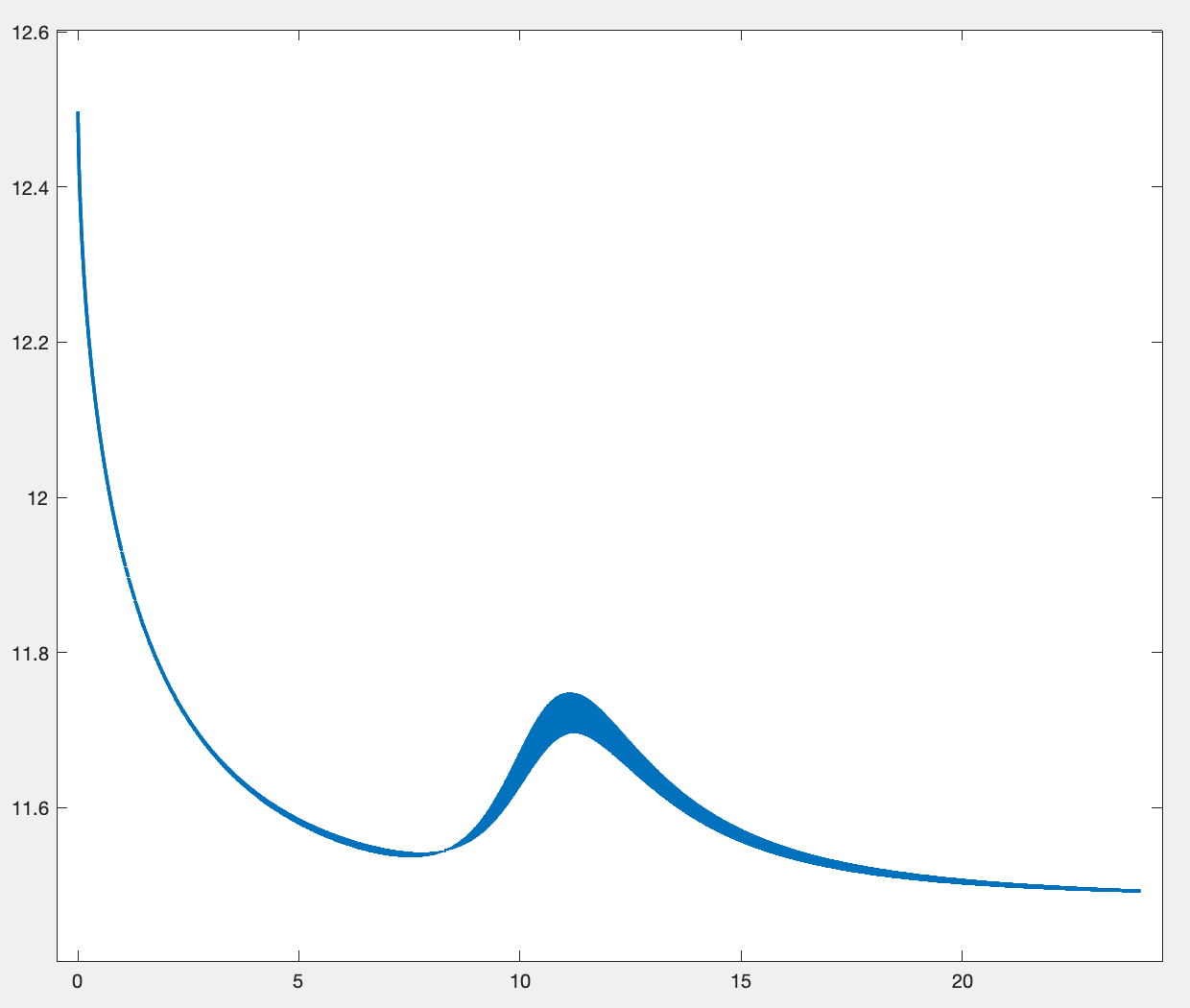}
\caption{Descent of energy for a larger step}
\label{fig:g3_energy_dt_larger}
\end{subfigure}%
\begin{subfigure}[t]{.5\textwidth}
\centering
\includegraphics[width=.9\linewidth]{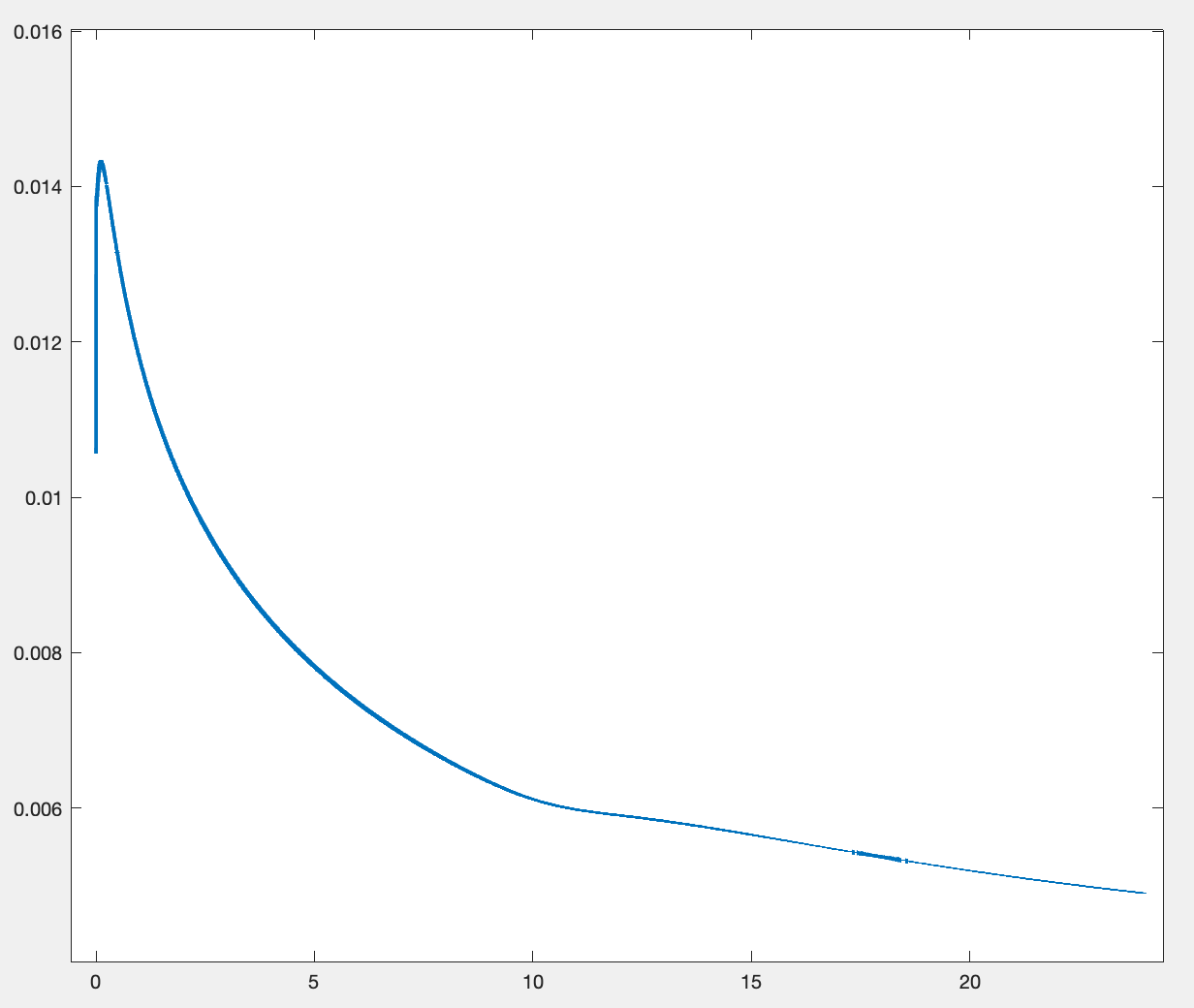}
\caption{Descent of gradient for a larger step}
\label{fig:g3_gradient_dt_larger}
\end{subfigure}%
\caption{A genus-3 example}
\label{fig:g3_example_4}
\end{figure}

\section{An Application to Remeshing}
The discrete harmonic map provides an effective way to map a high-genus surface onto a parameter domain globally. Hence, to process the original surface, we could choose to work on the parameter domain first, and then retrieve the desired effects on the surface by the inverse mapping. An important mesh-processing task is remeshing, which is crucial in scientific computing and computer graphics. In the Euclidean case, we usually generate a mesh structure in the parameter domain and map it back to the surface by the interpolated inverse mapping. The problem becomes more intricate for high-genus surfaces. The reason is that to compute the harmonic map, the original surface need to be cut to be simply-connected, and the final map is represented as the developing map from the simply-connected one to a fundamental domain of the target surface in $\D$.  

Let us denote the original triangulated surface $S$ by $(V,E,F)$ and the simply-connected one $\tilde{S}$ by $(\tilde{V},\tilde{E},F)$. Also, we suppose that the target surface is $\D/\Gamma$ for some Fuchsian group $\Gamma$ and one of its fundamental domain is a hyperbolic polygon $P$. A new mesh structure shall be generated on the polygon $P$ such that the boundary condition~\eqref{eq:hard_constraint} must be strictly met. With a harmonic map $\tilde{f}:\tilde{S}\to\D$ on hand, we first notice that $\tilde{f}(\tilde{S})$ does not necessarily cover $P$. But since $P$ is compact, we could find a discrete subset $\{\gamma_1,\cdots,\gamma_k\}\subset \Gamma$ such that
\begin{equation}
    P\subset\bigcup_{i=1}^{k}\gamma_i(\tilde{f}(\tilde{S}))
\end{equation}
Recall the fact that for any $\gamma\in \Gamma$, we have $\pi_V(i)=\pi_V(\gamma(i))$ for each vertex $i\in\tilde{V}$, where $\pi_V$ is the canonical projection from $\tilde{V}$ to $V$. Thus, the map $\pi_V$ can be properly defined and interpolated on $\bigcup_{i=1}^{k}\gamma_i(\tilde{f}(\tilde{S}))$, which gives us a way to map the mesh on $P$ back to the original surface. By gluing the boundary edges of this mesh, we finally get a new mesh structure on the original surface. This remeshing method is global and does not rely on partitioning the original mesh into several simply-connected pieces. But near the base point of the homology basis, the geometry may be complicated. Thus, some local remeshing near the base point may be taken as the last step to secure a good enough mesh structure.

\begin{figure}
\begin{subfigure}[t]{.4\textwidth}
\centering
\includegraphics[width=.8\linewidth]{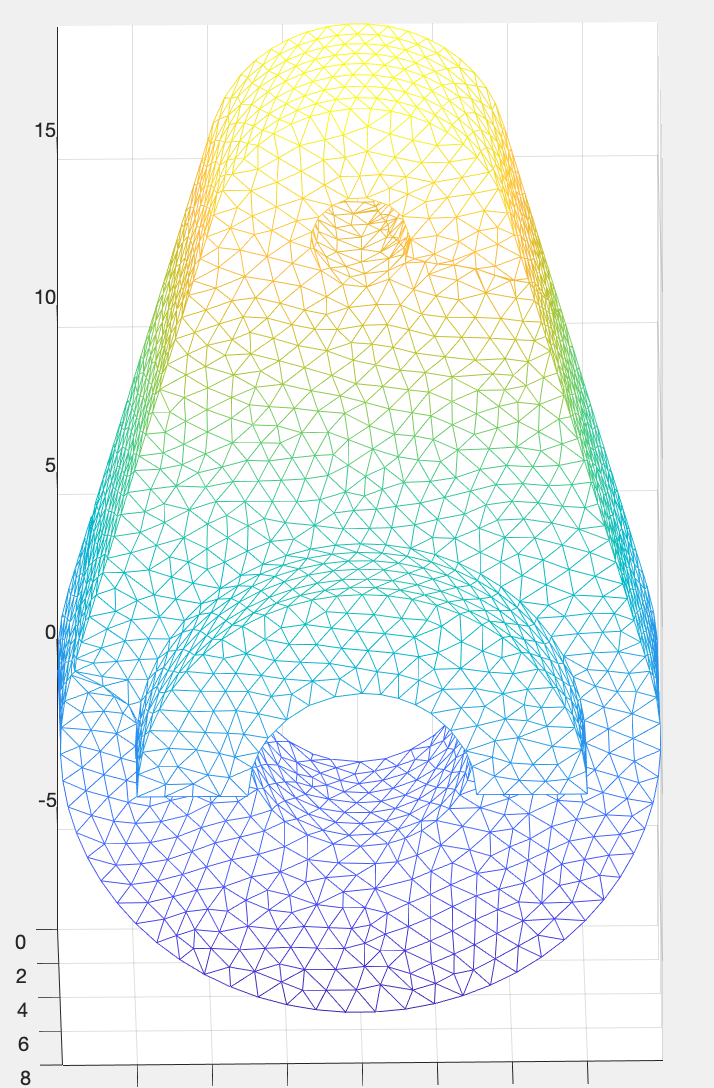}
\caption{A given genus-2 surface}
\label{fig:remesh_original}
\end{subfigure}%
\begin{subfigure}[t]{.6\textwidth}
\centering
\includegraphics[width=.9\linewidth]{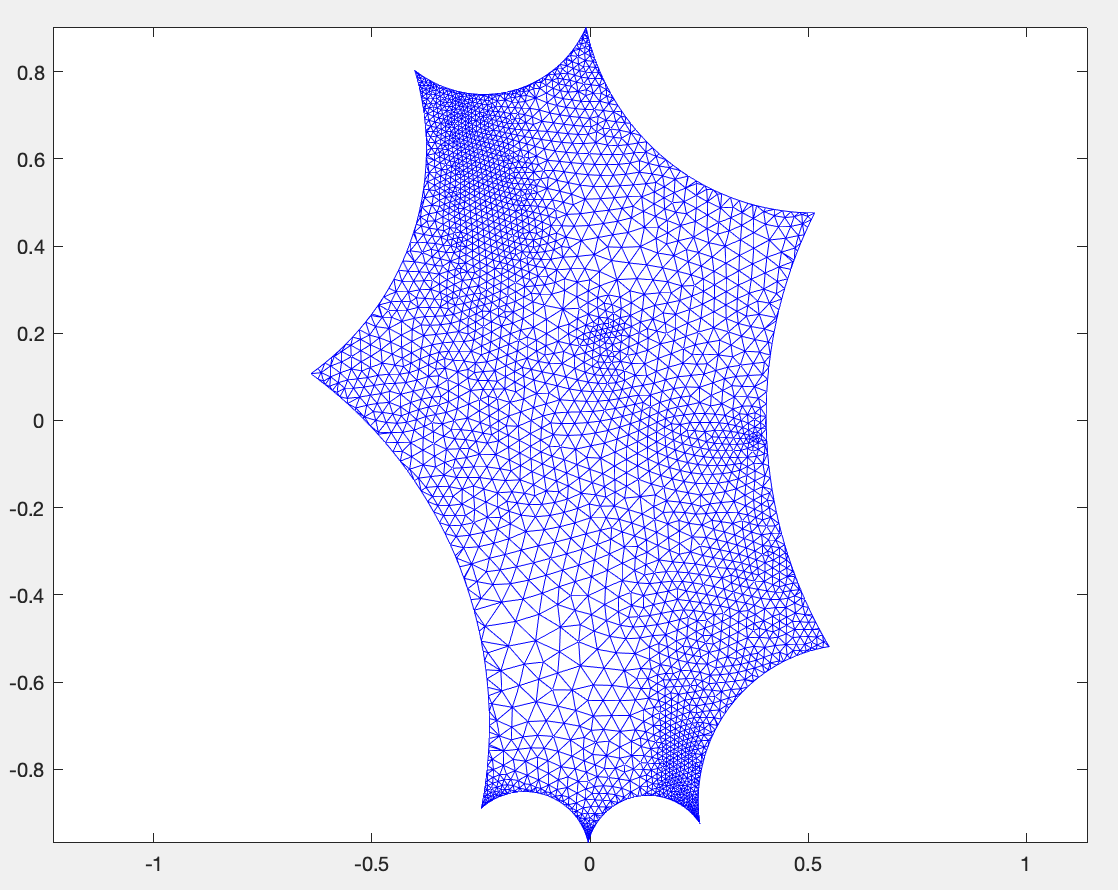}
\caption{The canonical polygon}
\label{fig:remesh_polygon}
\end{subfigure}%
\caption{The given surface to be remeshed}
\label{fig:remesh_1}
\end{figure}

To choose the target surface, one approach is to directly construct a paired polygon such as the octagon shown in Fig~\ref{fig:hyperbolic_octagon}. Another approach is to choose another triangulated surface of the same topology, compute a uniformization metric on it using the discrete Ricci flow, and then compute the mapping from the triangulated surface to the canonical polygon in $\D$ with respect to this flat metric. Here, we demonstrate the numerical result of both cases. Fig.~\ref{fig:remesh_1} shows a triangulated surface that has been cut along a homology basis. We find this surface in the online database Thingi10K~\cite{zhou2016thingi10k}. The left of Fig.~\ref{fig:remesh_2}, Fig.~\ref{fig:remesh_3}, and Fig.~\ref{fig:remesh_4} correspond to the first approach. The mesh structure on the octagon is constructed using the DistMesh software~\cite{persson2004simple}. The right of Fig.~\ref{fig:remesh_2}, Fig.~\ref{fig:remesh_3}, and Fig.~\ref{fig:remesh_4} correspond to the second approach. We choose a triangulated torus as the target surface and take the map from it to the canonical polygon as the target mesh structure.

\begin{figure}
\begin{subfigure}[t]{.5\textwidth}
\centering
\includegraphics[width=.9\linewidth]{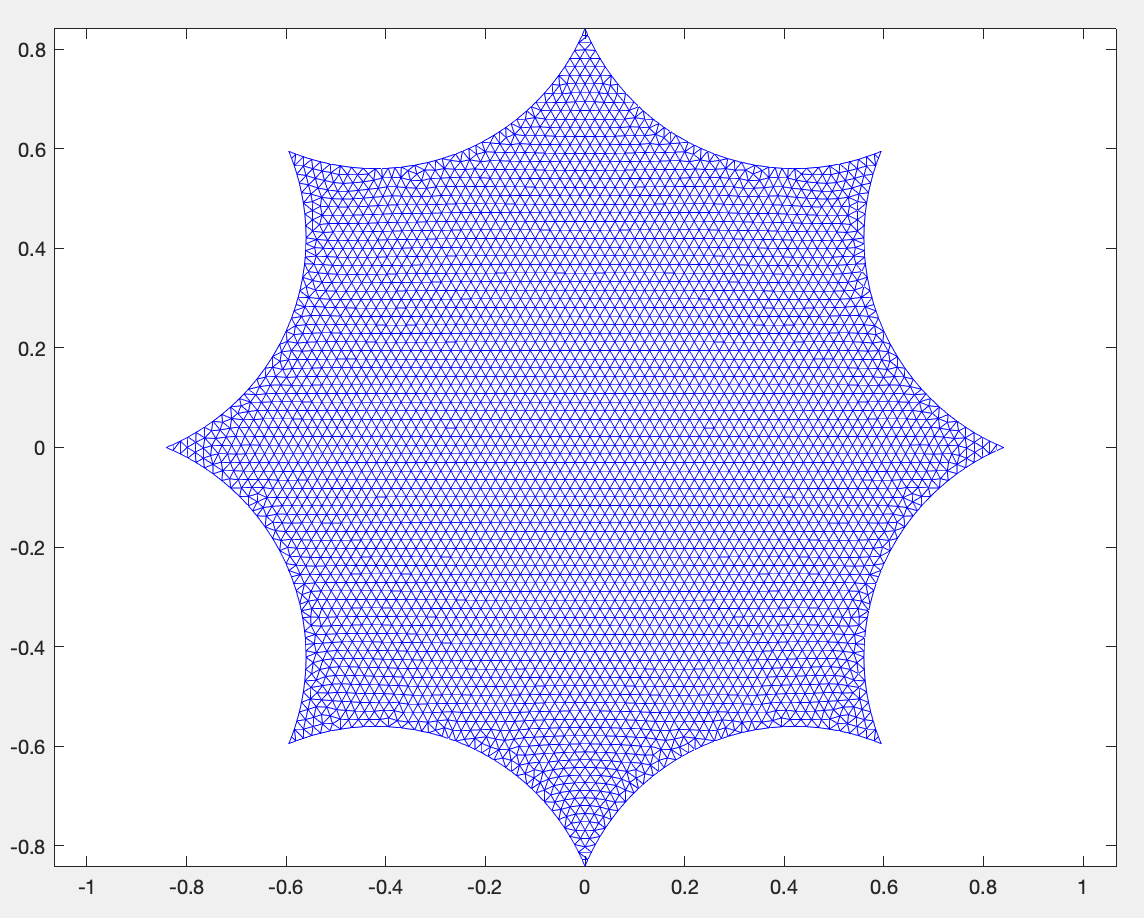}
\caption{A mesh constructed on a octagon}
\label{fig:remesh_template_1}
\end{subfigure}%
\begin{subfigure}[t]{.5\textwidth}
\centering
\includegraphics[width=.9\linewidth]{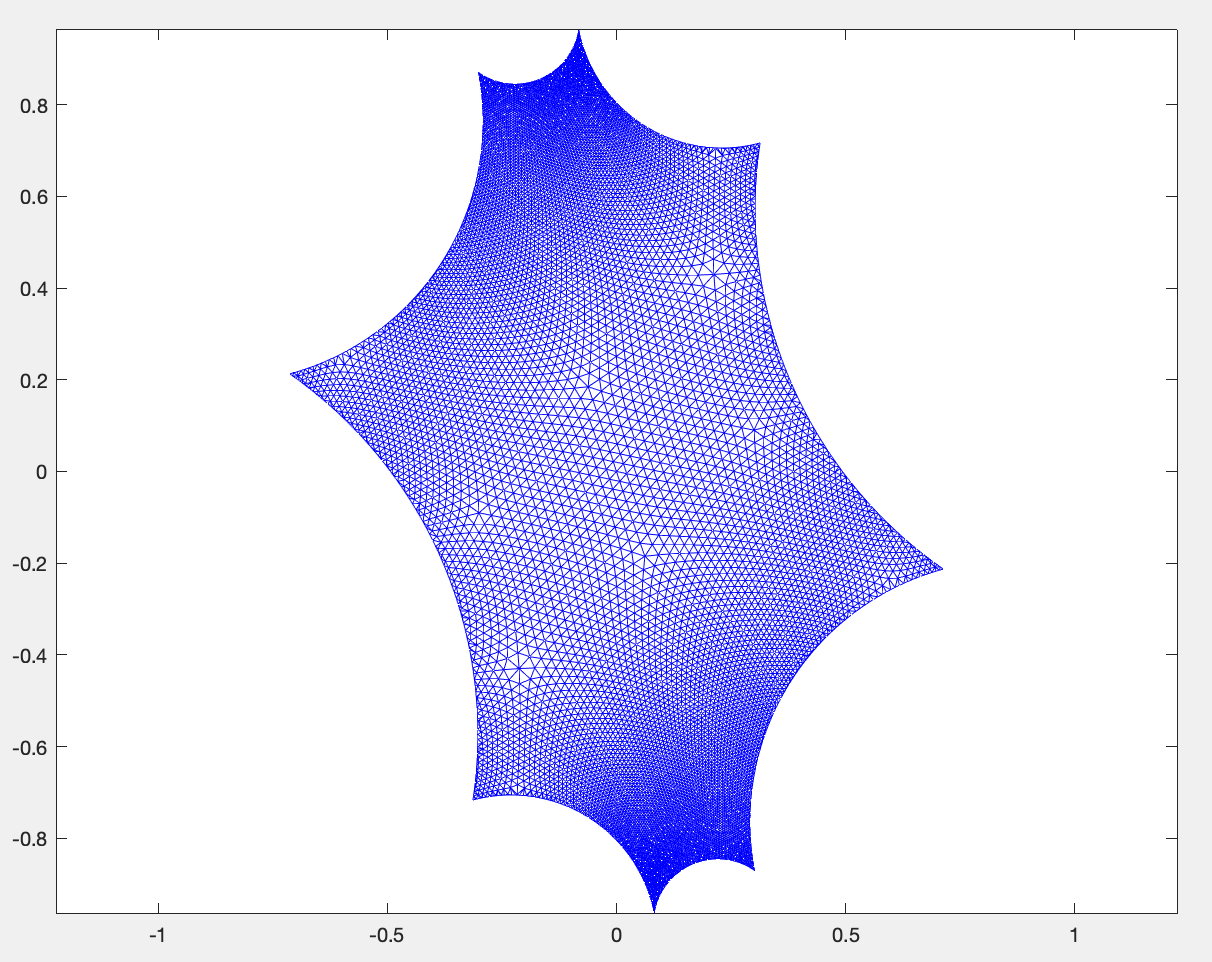}
\caption{The canonical polygon of a surface}
\label{fig:remesh_template_2}
\end{subfigure}%
\caption{The mesh structures on target surfaces}
\label{fig:remesh_2}
\end{figure}

\begin{figure}
\begin{subfigure}[t]{.5\textwidth}
\centering
\includegraphics[width=.9\linewidth]{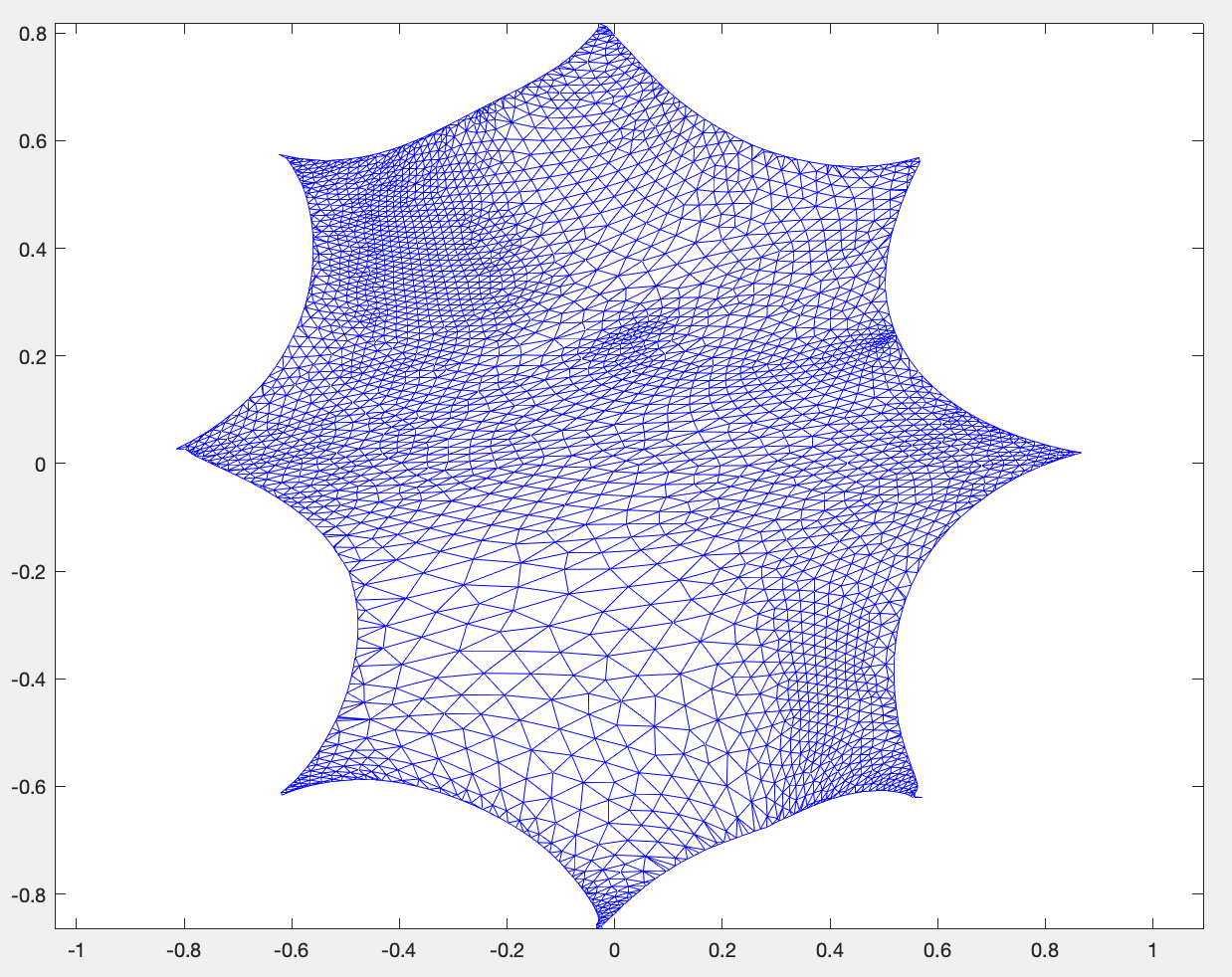}
\label{fig:remesh_map_1}
\end{subfigure}%
\begin{subfigure}[t]{.5\textwidth}
\centering
\includegraphics[width=.9\linewidth]{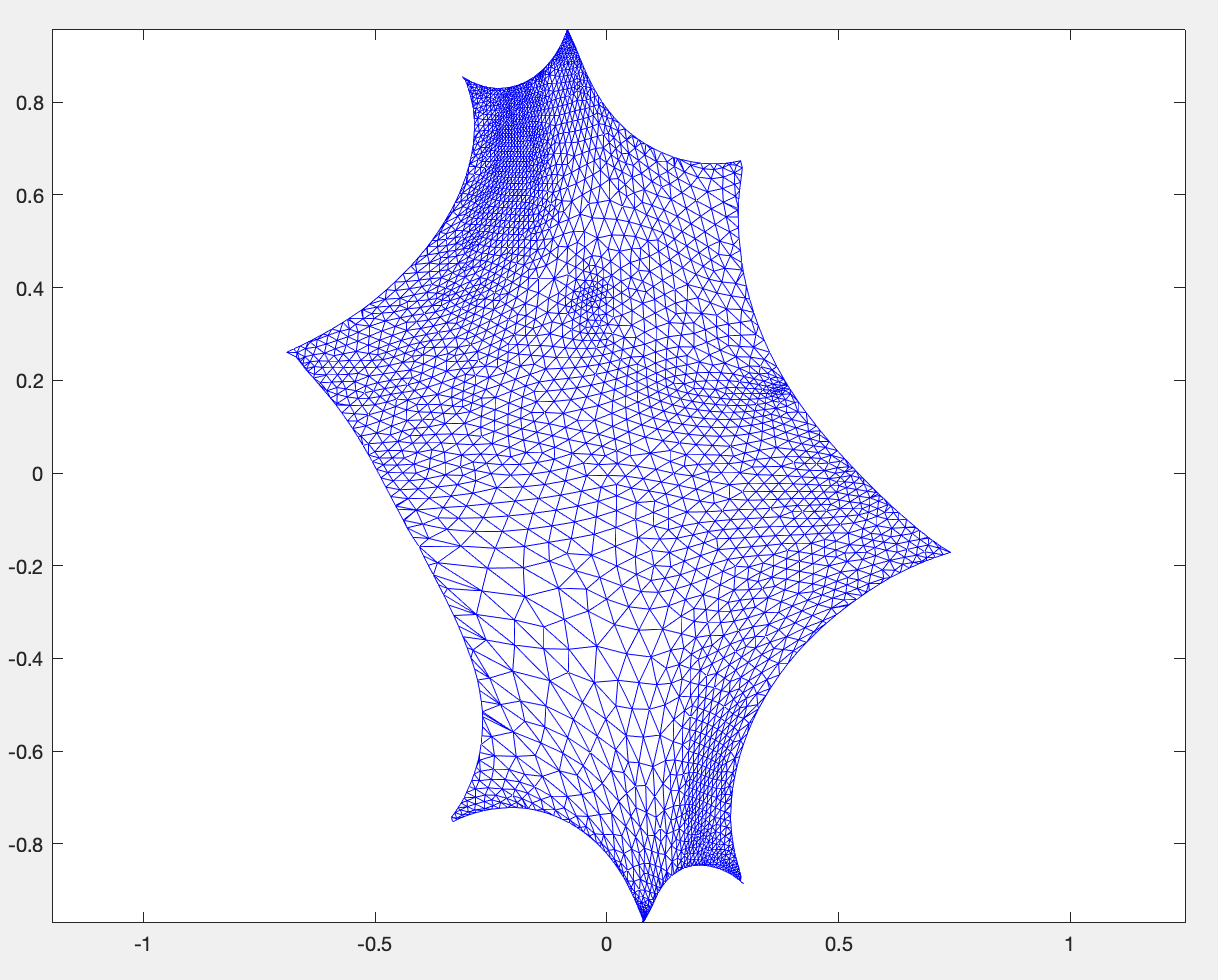}
\label{fig:remesh_map_2}
\end{subfigure}%
\caption{The harmonic maps from the input surface to the target ones}
\label{fig:remesh_3}
\end{figure}

\begin{figure}
\begin{subfigure}[t]{.494\textwidth}
\centering
\includegraphics[width=.7\linewidth]{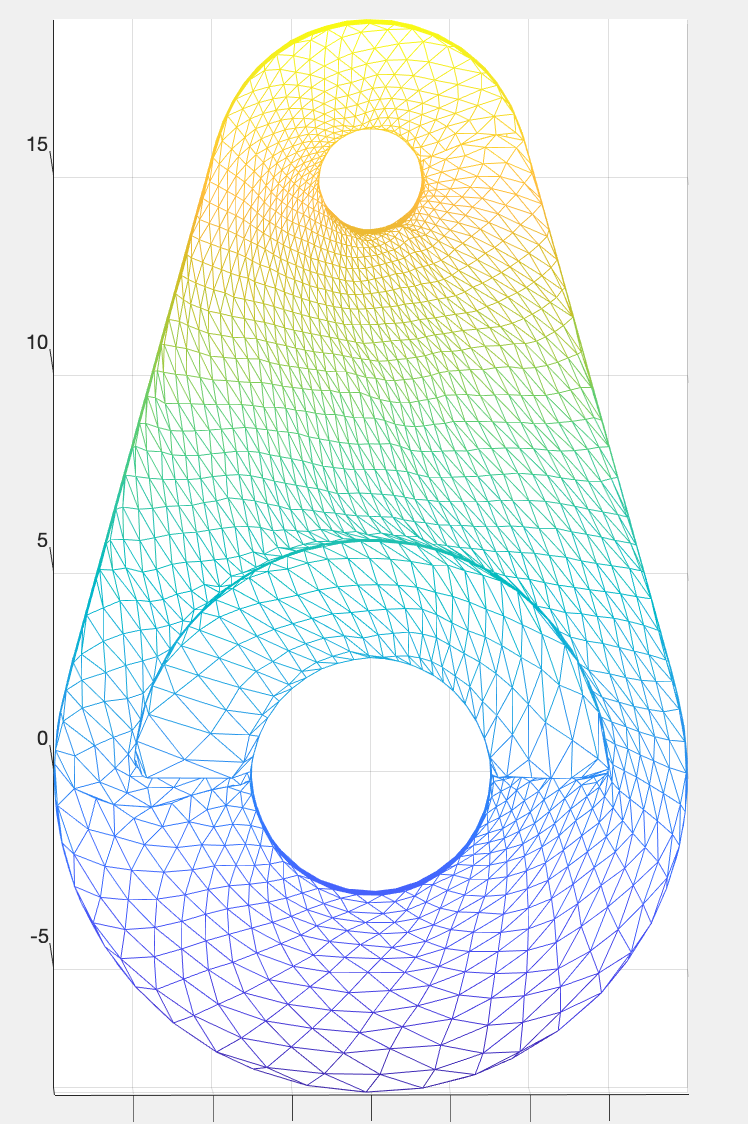}
\label{fig:remesh_result_1}
\end{subfigure}%
\begin{subfigure}[t]{.506\textwidth}
\centering
\includegraphics[width=.7\linewidth]{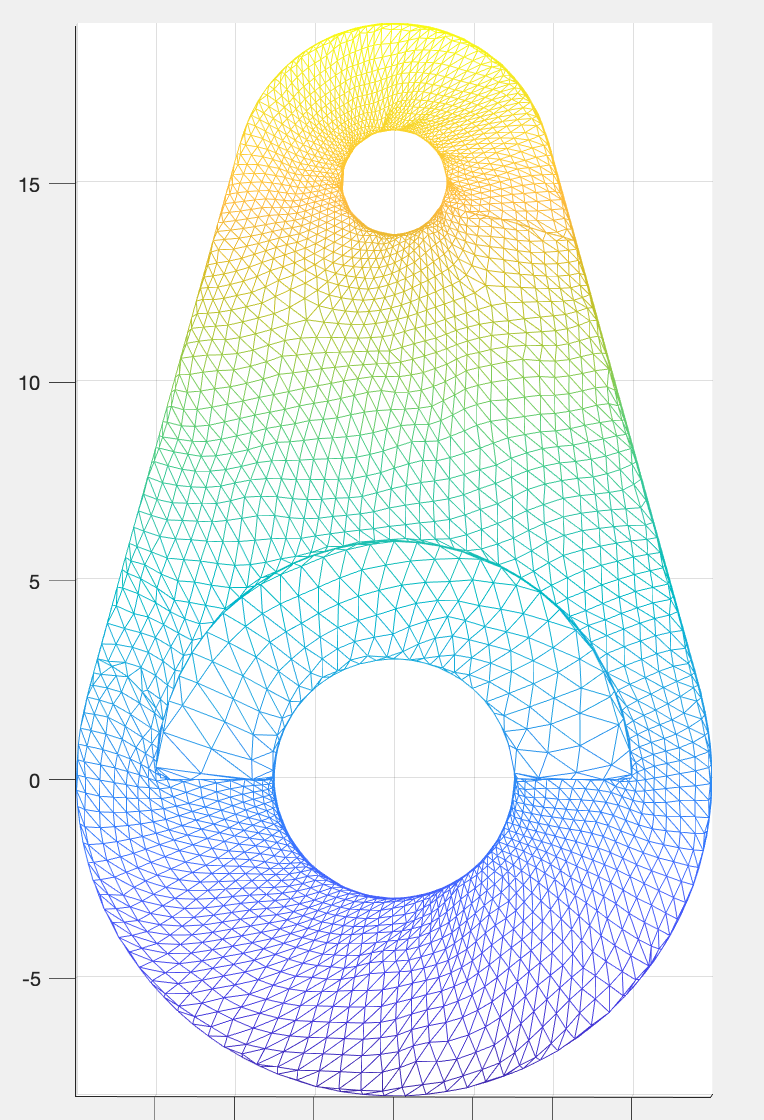}
\label{fig:remesh_result_2}
\end{subfigure}%
\caption{The two remeshed surfaces}
\label{fig:remesh_4}
\end{figure}

\bibliographystyle{siamplain}
\bibliography{harmonic}

\end{document}